\documentclass[11pt, reqno]{amsart}
\usepackage{amsmath,amssymb,amsfonts,amsthm,graphicx,mathrsfs,mathtools}
\usepackage{array}
\usepackage[margin=1.5in]{geometry}
\usepackage[dvipsnames]{xcolor}
\usepackage{graphicx}
\usepackage[safe]{tipa}
\usepackage{blkarray}
\usepackage{tikz}
\usepackage{array}
\usepackage{tikz}
\usetikzlibrary{matrix,arrows.meta}
\usetikzlibrary{matrix,arrows.meta,calc}
\usepackage{subcaption}
\makeatletter
\@namedef{subjclassname@2020}{%
\textup{2020} Mathematics Subject Classification}
\makeatother
\title[Complete embedded minimal surfaces in $\mathbb{R}^5$]{Embedded minimal surfaces with three ends associated with the Hoffman--Osserman catenoids}
\author[J. Lee]{Jaehoon Lee}
\address[]{Jaehoon Lee, Department of Mathematics, The Chinese University of Hong Kong, Shatin, N.T., Hong Kong}
\email{jaehoonlee@cuhk.edu.hk}

\author[E. Yeon]{Eungbeom Yeon}
\address[]{Eungbeom Yeon, Department of Mathematical Sciences, Pusan National University, Busan 46241, Republic of Korea}
\email{ebeom.yeon@pusan.ac.kr}

\allowdisplaybreaks

\begin{document}

\newtheorem{theorem}{theorem}[section]
\newtheorem{thm}[theorem]{Theorem}
\newtheorem{lemma}[theorem]{Lemma}
\newtheorem{cor}[theorem]{Corollary}
\newtheorem{prop}[theorem]{Proposition}
\newtheorem{rmk}[theorem]{Remark}
\newtheorem{Ex}[theorem]{Example}
\newtheorem{conj}[theorem]{Conjecture}
\newtheorem*{mainthm1}{Main Theorem}
\newtheorem*{mainthm2}{Theorem 2}

\numberwithin{equation}{section}
\newcommand{\RNum}[1]{\uppercase\expandafter{\romannumeral #1\relax}}
\newcommand{\R}{\mathbb{R}}
\newcommand{\C}{\mathbb{C}}
\newcommand{\grad}{\nabla}
\newcommand{\laplacian}{\Delta}
\newcommand{\tgamma}{\tilde{\gamma}}
\newcommand{\ttau}{\tilde{\tau}}
\newcommand{\pp}{\Phi}
\newcommand{\tkappa}{\tilde{\kappa}}
\renewcommand{\d}{\textup{d}}
\newlength{\mywidth}
\newcommand\bigfrown[2][\textstyle]{\ensuremath{%
  \array[b]{c}\text{\resizebox{\mywidth}{.7ex}{$#1\frown$}}\\[-1.3ex]#1#2\endarray}}
\newcommand{\arc}[1]{{%
  \setbox9=\hbox{#1}%
  \ooalign{\resizebox{\wd9}{\height}{\texttoptiebar{\phantom{A}}}\cr#1}}}
\newcommand\buildcirclepm[1]{%

  \begin{tikzpicture}[baseline=(X.base), inner sep=-0.1, outer sep=-2]

    \node[draw,circle] (X)  {\footnotesize\raisebox{1ex}{$#1\pm$}};

  \end{tikzpicture}%

}
\newcommand{\introheading}[1]{%
\par\smallskip
\noindent\textbf{#1.}
}

\subjclass[2020]{53A10, 53C42}
\keywords{Costa--Hoffman--Meeks surfaces, Hoffman--Osserman catenoids, minimal surface, generalized Weierstrass representation, high codimension}

\begin{abstract}
For every positive integer $g$ and every Hoffman--Osserman catenoid in $\mathbb{R}^5$, we construct a complete embedded minimal surface of genus $g$ that is not contained in any proper affine subspace of $\mathbb{R}^5$. These surfaces can be viewed as higher-codimensional analogues of the Costa--Hoffman--Meeks surfaces, associated with the union of the corresponding Hoffman--Osserman catenoid and the plane containing its waist circle. We provide explicit Weierstrass data for these surfaces. Our construction can also be interpreted as restoring conformality to a deformed Costa--Hoffman--Meeks immersion by introducing two additional harmonic coordinate functions.
\end{abstract}

\maketitle

\section{Introduction}\label{intro}
\setcounter{equation}{0}
The Costa surface is a complete embedded minimal surface in $\mathbb{R}^3$ of genus one, with three ends and finite total curvature \cite{C,C2}. Hoffman and Meeks proved its embeddedness and constructed analogous surfaces of every positive genus \cite{HM1}. They also showed that the resulting Costa--Hoffman--Meeks surface of genus $g$ has $4(g+1)$ symmetries and is uniquely characterized among complete embedded minimal surfaces of the same genus with three ends and finite total curvature by having at least $4(g+1)$ symmetries. 

Moreover, these surfaces can be viewed as desingularizations of the union of a catenoid and the plane containing its waist circle. Indeed, Hoffman and Meeks \cite{HM2} showed that, after suitable rigid motions and rescaling to fix the flux of the catenoidal ends, a subsequence converges to this union as the genus tends to infinity. The convergence is smooth on compact subsets away from the intersection circle.  

In this paper, we construct higher-codimensional analogues of the Costa--Hoffman--Meeks surfaces. Motivated by the desingularization picture described above, we seek suitable higher-codimensional counterparts of the catenoid. These arise in the classification of doubly connected minimal surfaces in $\mathbb{R}^n$ with total curvature $-4\pi$ by Hoffman and Osserman \cite[Proposition 6.6]{HO}. Their classification shows that such surfaces are contained in a $5$-dimensional affine subspace and are foliated by ellipses. Thus, each of these surfaces, together with the plane containing one of its ellipses, provides a natural pair for desingularization.

With the $4(g+1)$ symmetries of the Costa--Hoffman--Meeks surfaces in mind, we focus on desingularization pairs that can admit a number of symmetries increasing with the genus. We note that only those arising from the circle-foliated surfaces have this property; see Section \ref{ss313131} for details. Among the surfaces in the Hoffman--Osserman classification, those foliated by circles are the catenoids in $\mathbb{R}^3$, the Lagrangian catenoids in $\mathbb{R}^4$, and the circle-foliated surfaces lying entirely in $\mathbb{R}^5$, which we refer to as the Hoffman--Osserman catenoids. Hence, it is natural to ask whether one can desingularize the union of each of these catenoids and the plane containing its waist circle.

In \cite{JLEY1} and \cite{JLEY2}, the authors studied this desingularization problem for the Lagrangian catenoid in $\mathbb{R}^4$ in genus one and higher genus, respectively.
In the genus-one case, it was proved that such desingularizations must be $J$-holomorphic for some almost complex structure $J$. Moreover, under the additional assumptions of embeddedness and at least $8$ symmetries, it was shown that the number of symmetries must be either $8$ or $12$, with each case uniquely determined up to rigid motions and scaling. In contrast, for higher genus, it was proved that no such embedded desingularization of genus $g$ exists if it has at least $4(g+1)$ symmetries.

We now turn to the Hoffman--Osserman catenoids in $\mathbb{R}^5$ and consider a related existence problem. These catenoids can be parametrized, up to rigid motions and scaling, by
\begin{align*}
\mathsf{HO}(c,f,\alpha):=\left\{\textup{Re} \left( - \frac{\alpha^2}{4c} \xi - \frac{c}{\xi}, i \frac{\alpha^2}{4c} \xi - \frac{ic}{\xi},  \alpha \log \xi,  f \xi, i f \xi \right) \in\mathbb{R}^5\mid \xi\in\mathbb{C}\setminus\{0\}\right\},
\end{align*}
where $\alpha$ is an arbitrary positive real number and $c,f$ are nonzero complex numbers. The level curves $|\xi|=\mathrm{constant}$ are mapped to circles. In particular, the waist circle corresponds to
\begin{align*}
|\xi|=\left( \frac{16 |c|^4}{16|c|^2 |f|^2 + \alpha^4} \right)^{\frac{1}{4}}.
\end{align*}
We denote the plane containing this waist circle by $\mathsf{P}(c,f,\alpha)$.

Our main result is the following.
\begin{mainthm1}
Let $\mathsf{HO}(c,f,\alpha)$ and $\mathsf{P}(c,f,\alpha)$ denote the Hoffman--Osserman catenoid and the plane containing its waist circle, respectively, as defined above. For each integer $g\geq 1$, there exists a complete embedded minimal surface $\mathsf{S_g}(c,f,\alpha)$ of genus $g$ in $\mathbb{R}^5$ with the following properties:
\begin{enumerate}
\item $\mathsf{S_g}(c,f,\alpha)$ is not contained in any proper affine subspace of $\mathbb{R}^5$;
\item The total curvature of $\mathsf{S_g}(c,f,\alpha)$ is $-4\pi(g+2)$;
\item $\mathsf{S_g}(c,f,\alpha)$ has two embedded ends with nonzero logarithmic growth and one embedded planar end; 
\item The limiting tangent planes of the two ends with nonzero logarithmic growth coincide with those of the two ends of $\mathsf{HO}(c,f,\alpha)$, while the limiting tangent plane of the planar end is parallel to $\mathsf{P}(c,f,\alpha)$;
\item $\mathsf{S_g}(c,f,\alpha)$ admits a dihedral symmetry group of order $4(g+1)$, generated by
\begin{align*}
\Lambda :=
\begin{pmatrix}
\mathcal{S}_{\frac{\pi}{g+1}} & 
\\ & 1 &
\\ & &\mathcal{S}_{-\frac{\pi}{g+1}}
\end{pmatrix}\cdot U,
\qquad
K := 
\begin{pmatrix}
\mathcal{T}_{2\theta_c}  & 
\\ & 1 &
\\ & &\mathcal{T}_{2\theta_f} 
\end{pmatrix},
\end{align*}
where $U$ denotes the reflection across the plane passing through the origin and parallel to $\mathsf{P}(c,f,\alpha)$,
\begin{align*}
c=|c|e^{i\theta_c},\quad f=|f|e^{i\theta_f},
\end{align*}
and 
\begin{align*} 
\mathcal{S}_{\theta}:=
\begin{pmatrix}
\cos\theta & -\sin\theta\\
\sin\theta & \cos\theta
\end{pmatrix},\quad 
\mathcal{T}_{\theta}:=
\begin{pmatrix}
\cos\theta & -\sin\theta\\
-\sin\theta & -\cos\theta
\end{pmatrix},
\quad \theta\in\mathbb{R};
\end{align*}
\item $\mathsf{S_g}(c,f,\alpha)$ can be represented as a graph over a deformed Costa--Hoffman--Meeks surface of genus $g$;
\item $\mathsf{S_g}(c,f,\alpha)$ is conformally equivalent to the Costa--Hoffman--Meeks surface of genus $g$.
\end{enumerate}
\end{mainthm1}
We mention that none of these surfaces contains a straight line; see Remark \ref{rmk57}. 

\introheading{Construction strategy} 
Rather than addressing the desingularization problem directly, we take a different approach based on the generalized Weierstrass representation. The geometric properties expected of such a desingularization can be translated into conditions on the Weierstrass data, and our goal is to construct well-defined data satisfying these conditions.

In $\mathbb{R}^3$, embeddedness and at least $4(g+1)$ symmetries, in the setting of the Costa--Hoffman--Meeks characterization, determine the underlying conformal structure and the orientations of the ends \cite{HM1}. In higher codimension, however, additional possibilities arise. In principle, one can describe the Weierstrass data corresponding to these possibilities using various parameters, but the resulting period problems can be highly complicated. Instead of investigating all such possibilities, we make several geometric choices to establish the existence of the desired surfaces.

More precisely, we choose the underlying Riemann surface to be conformally equivalent to the Costa--Hoffman--Meeks surface of the same genus and require the intrinsic symmetries induced by ambient isometries to agree with those of the Costa--Hoffman--Meeks surface. Moreover, there are several possible realizations of the ambient symmetries, depending, for instance, on the choice of the reflection $U$ in the theorem. We restrict ourselves to the particular choice of $U$ specified there and construct well-defined Weierstrass data satisfying the corresponding conditions. The explicit Weierstrass data for our construction and the underlying ideas are presented in Section \ref{WAWAWA}.

These choices leave open the possibility of other types of minimal surfaces in higher codimension. For comparison, in the genus-one case in $\mathbb{R}^4$, the desingularization with $8$ symmetries is conformally equivalent to the Costa surface, while a distinct surface with $12$ symmetries also exists. This raises the question of whether there are other desingularizations associated with the Hoffman--Osserman catenoids that are not conformally equivalent to the Costa--Hoffman--Meeks surfaces and have at least $4(g+1)$ symmetries.

\introheading{An alternative viewpoint}
Our construction also admits another interpretation. As indicated in Property $(6)$ of the theorem, the resulting surfaces can be represented as graphs over suitably deformed Costa--Hoffman--Meeks surfaces. Such a deformation preserves harmonicity with respect to the original conformal structure but breaks conformality. From this perspective, the construction can be understood as restoring conformality by introducing two additional coordinate functions.

A similar idea appears in a general result of Alarc\'on, Fern\'andez, and L\'opez \cite[Theorem~B]{AFL}, which asserts that any nonconstant harmonic map from an open Riemann surface into $\mathbb{R}^{N-2}$, $N\geq 3$, admits a complete conformal minimal lift into $\mathbb{R}^{N}$ obtained by adding two harmonic coordinate functions. Our construction realizes such a completion with precise control over the geometry of the resulting surfaces. A key feature is the use of explicit meromorphic data with carefully chosen poles and residues. These allow us to control the asymptotic behavior of the ends, satisfy the period conditions, and realize the prescribed end geometry and symmetries.

\introheading{Blow-up limits}
The limiting behavior of the surfaces constructed in this paper suggests further questions. The classical Costa--Hoffman--Meeks surfaces can be translated and rescaled so that the maximum absolute Gaussian curvature is $1$ and is attained at the origin. A subsequence then converges smoothly on compact subsets to the singly periodic Scherk surface \cite{HM2}. We are currently investigating analogous blow-up limits near the handles of these surfaces as the genus tends to infinity, including how these limits depend on the deformation parameters.

In particular, it remains to determine whether the limiting surfaces are classical Scherk surfaces contained in three-dimensional affine subspaces or whether other models can arise in higher codimension. Such an analysis may provide local models for future gluing constructions, potentially leading to further examples of minimal surfaces in higher codimension.

The paper is organized as follows. Section \ref{pre} reviews the generalized Weierstrass representation and discusses the form of the Weierstrass data corresponding to multiplicity-one embedded ends. In Section \ref{sam33}, we investigate the number of symmetries associated with the surfaces appearing in the Hoffman--Osserman classification and show that the circle-foliated Hoffman--Osserman catenoids are suitable for our construction. We also determine the possible forms of their symmetries. Section \ref{WAWAWA} is devoted to presenting explicit Weierstrass data for surfaces satisfying the properties stated in the main theorem and discussing the ideas behind their construction. Finally, we prove the main theorem in Section \ref{main} by showing that these Weierstrass data give rise to well-defined minimal surfaces satisfying all the required properties.


\section*{Acknowledgements}
JL was supported by a KIAS Individual Grant MG086402 at Korea Institute for Advanced Study, in part by National Research Foundation of Korea [No. RS-2026-25472185], and substantially by research grants from the Research Grants Council of the Hong Kong Special Administrative Region, China [Project No.: CUHK 14306624]. EY was supported by National Research Foundation of Korea NRF-2022R1C1C2013384. 

\section{Preliminaries}\label{pre}
\setcounter{equation}{0}
\subsection{Generalized Weierstrass representation}
The generalized Weierstrass representation is a powerful tool for studying minimal surfaces in $\mathbb{R}^n$. It is particularly useful for surfaces of finite total curvature with prescribed end behavior, where it can be combined with Riemann surface theory. Since this representation plays a central role in our construction, we briefly recall it here for completeness. For further details on the generalized Weierstrass representation and its applications to minimal surfaces in $\mathbb{R}^4$ and higher-dimensional Euclidean spaces, we refer the reader to \cite{HO,JLEY1,JLEY2}.

Every complete immersed minimal surface in $\mathbb{R}^N$ of finite total curvature and genus $g$ admits a parametrization $X: \Sigma_g\setminus\{q_1,q_2,\cdots,q_k\}\to\mathbb{R}^N$, where $\Sigma_g$ is a closed Riemann surface of genus $g$ and $q_1,q_2,\cdots,q_k$ are distinct points, given by
\begin{align}\label{gwr}
X=\text{Re}\int (\phi_1,\phi_2,\cdots,\phi_N)+b
\end{align}
for some $b\in\mathbb{R}^N$. Here, the $\phi_j$ are meromorphic $1$-forms on $\Sigma_g$ satisfying the following conditions:
\begin{itemize}
\item[(1)] Each $\phi_j$ is holomorphic on $\Sigma_g\setminus\{q_1,q_2,\cdots,q_k\}$, and at least one of the $\phi_j$ has a pole at each puncture.
\item[(2)] $\sum_{j=1}^N\phi_j^2\equiv0$ and $\sum_{j=1}^N|\phi_j|^2>0$.
\item[(3)] Each $\phi_j$ has vanishing real periods on $\Sigma_g\setminus\{q_1,q_2,\cdots,q_k\}$.
\end{itemize}
Conversely, if a collection of meromorphic $1$-forms satisfies the above conditions, the map $X$ given by (\ref{gwr}) is a conformal harmonic immersion, giving rise to a complete minimal surface of finite total curvature and genus $g$ in $\mathbb{R}^N$. Each puncture $q_j$ corresponds to an end of the surface. 

The formula (\ref{gwr}) is called the \textbf{generalized Weierstrass representation}, and the meromorphic $1$-forms $\phi_j$ are called the \emph{Weierstrass data}. The map 
\begin{align*}
[\phi_1,\phi_2,\cdots,\phi_N]:\Sigma_g\to\mathbb{P}^{N-1}
\end{align*}
is called the generalized Gauss map. By the conformality condition $(2)$, the image of the generalized Gauss map lies in the quadric 
\begin{align*}
\left\{[\zeta_1,\zeta_2,\cdots,\zeta_N]\in\mathbb{P}^{N-1}\ |\ \sum_{j=1}^N\zeta_j^2=0\right\}.
\end{align*}

\subsection{Embedded end of multiplicity one}\label{emenmulone}
In higher codimensions, embeddedness alone does not necessarily rule out minimal ends with higher multiplicity. Since we are particularly interested in multiplicity-one ends, such as those of the Lagrangian catenoids and Hoffman--Osserman catenoids, we restrict our attention to meromorphic $1$-forms belonging to
\begin{align*}
H^0(\Sigma_g, C\otimes [2q_1+2q_2+\cdots+2q_k]),
\end{align*}
where $C$ denotes the canonical line bundle of $\Sigma_g$.

We first recall the definition of a multiplicity-one embedded planar end. A complete minimal immersion $X:\Sigma_g\setminus\{q_1,q_2,\cdots,q_k\}\to\mathbb{R}^N$ of finite total curvature is said to have such an end at $q_j$ if, outside a compact subset, the end is asymptotic to a plane and can be represented as a graph over that plane, with no logarithmic growth in any coordinate function. This definition admits an equivalent characterization in terms of the Weierstrass data. For further discussion of this characterization, see \cite{JLEY1,JLEY2}.

\begin{prop}
A minimal immersion $X$ given by the generalized Weierstrass representation (\ref{gwr}) has an embedded planar end at $q\in\{q_1,q_2,\cdots,q_k\}$ if and only if
\begin{align}\label{OrdRes}
\min_{1\leq j\leq N}\textup{ord}_{q}\phi_j=-2\quad \text{and}\quad \textup{res}_{q}\phi_1=\textup{res}_{q}\phi_2=\cdots=\textup{res}_{q}\phi_N=0.
\end{align} 
\end{prop}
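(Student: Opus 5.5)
We work in a local coordinate $z$ centered at $q$, so $z(q)=0$, and write each $1$-form as $\phi_j=(a_{j,m}z^{m}+a_{j,m+1}z^{m+1}+\cdots)\,dz$. We standing-assume, as in this subsection, that $\phi_j\in H^0(\Sigma_g,C\otimes[2q_1+\cdots+2q_k])$, so $\textup{ord}_q\phi_j\geq -2$ for every $j$. At least one $\phi_j$ has a pole at $q$ by condition (1) of the representation, so the minimum order is $-1$ or $-2$. The proof splits into a local expansion of $X$ near $q$ and a matching of its terms against the definition of an embedded planar end: asymptotic to a plane, a graph over it outside a compact set, and no logarithmic growth.

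\emph{Sufficiency.} Assume (\ref{OrdRes}). Since all residues vanish, integrating gives
\[
X(z)=\textup{Re}\Bigl(-\frac{A}{z}\Bigr)+B+O(|z|),\qquad A=(a_{1,-2},\dots,a_{N,-2})\in\mathbb{C}^N\setminus\{0\},
\]
with no $\log|z|$ term. Conformality (2) forces $\sum_j\phi_j^2\equiv0$. The coefficient of $z^{-4}$ gives $\sum_j a_{j,-2}^2=0$, so writing $A=u+iv$ we get $|u|=|v|$ and $u\perp v$. Moreover $u\neq 0$, since otherwise $A=0$. Hence $w=-1/z$ covers a neighborhood of $\infty$ exactly once, and $X=\textup{Re}(Aw)+B+O(1/|w|)$ is the linear isometric-up-to-scale embedding of the $w$-plane onto the plane $\Pi=\textup{span}\{u,v\}$, plus a bounded perturbation with decaying derivatives. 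Orthogonal projection to $\Pi$ is therefore a local diffeomorphism that is proper and of degree one near infinity, so it is injective outside a compact set. The end is thus a graph over $\Pi$, and it carries no logarithmic term.

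\emph{Necessity.} Assume the end is an embedded planar end. Having no logarithmic growth in any coordinate means every period of $\textup{Re}\,\phi_j$ around $q$ vanishes and no $\log|z|$ term appears, so $\textup{Re}(\textup{res}_q\phi_j)$ must vanish. The imaginary part vanishes as well, because $\textup{res}_q\phi_j$ is the coefficient of $\log z$ and the real parts have no multivalued or $\log|z|$ contribution; together these give $\textup{res}_q\phi_j=0$. It remains to exclude $\min\textup{ord}_q\phi_j=-1$, for which we argue by cases on the order.
\begin{itemize}
\item[(i)] If $\min_j\textup{ord}_q\phi_j=-1$, then some $\phi_j$ has a simple pole with nonzero residue. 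This contradicts the vanishing of residues just established; equivalently, such a $\phi_j$ produces $\log|z|$ growth or bounded coordinates, which is not a graph over a plane at infinity.
\item[(ii)] Otherwise all orders are at least $-2$, and at least one pole is present, so the minimum equals $-2$.
\end{itemize}

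The main obstacle is the residue step, namely justifying that the definition's ``no logarithmic growth'' excludes the imaginary part of the residue. In $\textup{Re}\int\phi_j$, a residue $r$ contributes $\textup{Re}(r)\log|z|-\textup{Im}(r)\arg z$. The $\arg z$ part is forbidden by well-definedness, which is the real-period condition (3), and the $\log|z|$ part is forbidden by the hypothesis. Both parts of $r$ therefore vanish, and the graph property needs only the quadric condition on the leading coefficient.
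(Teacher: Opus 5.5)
\textbf{Verdict: there is a genuine gap, in the order part of the ``only if'' direction.}

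Your sufficiency argument is fine. Your residue argument in the converse is also fine: condition (3) forces the residue vector to be real, and the no-logarithmic-growth hypothesis then forces it to vanish. This works for every pole order.

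The problem is that you import $\textup{ord}_q\phi_j\geq -2$ as a standing hypothesis. The proposition is stated for arbitrary data satisfying (1)--(3), and under your assumption the condition $\min_j\textup{ord}_q\phi_j=-2$ carries no information at all. Indeed, if the minimal order were $-1$, the residue vector $b$ would be real by (3). The $z^{-2}$ coefficient of $\sum_j\phi_j^2\equiv 0$ would give $b\cdot b=0$, hence $b=0$, a contradiction. So in your setting $\min_j\textup{ord}_q\phi_j=-2$ holds automatically. Your case (ii) therefore assumes away exactly what the order condition in the converse must establish. That is, an end that is a graph over a plane cannot come from a pole of order $m+1\geq 3$, i.e.\ it cannot be a planar end of multiplicity $m\geq 2$. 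Such ends are not excluded by the other hypotheses. The local model $X=\textup{Re}(z^{-2},iz^{-2},0,\ldots,0)$ has conformal data, no residues, and no logarithmic term, and it is asymptotic to a plane. It is not a graph, because it covers that plane twice.

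\textbf{How to close the gap.} This is the usual fact that an embedded end has multiplicity one.
\begin{itemize}
\item Set $m+1:=-\min_j\textup{ord}_q\phi_j$ and let $A=u+iv\in\mathbb{C}^N\setminus\{0\}$ be the coefficient vector of $z^{-m-1}dz$.
\item The $z^{-2m-2}$ coefficient of $\sum_j\phi_j^2$ gives $A\cdot A=0$, so $|u|=|v|>0$ and $u\perp v$.
\item Since the residues vanish, $X(z)=\textup{Re}\bigl(-A/(mz^m)\bigr)+o(|z|^{-m})$, with $dX-\textup{Re}(Az^{-m-1}dz)=o(|z|^{-m-1})$.
\item Hence $X/|X|$ accumulates on the whole unit circle of $\Pi_A=\textup{span}_{\mathbb{R}}\{u,v\}$. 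Any plane to which the end is asymptotic must therefore be parallel to $\Pi_A$.
\item Identify $\Pi_A$ with $\mathbb{C}$ isometrically via $u/|u|\mapsto 1$, $-v/|u|\mapsto i$. Then the orthogonal projection of the end onto $\Pi_A$ is $z\mapsto -\frac{|u|}{m}z^{-m}$ plus a perturbation of lower order, together with its derivative.
\item So the projection is a proper local diffeomorphism from a small punctured disk about $q$ onto a neighborhood of infinity. The image of $|z|=\varepsilon$ winds $m$ times around the origin, so the projection is an $m$-sheeted covering there.
\item Injectivity of the graph projection then forces $m=1$, i.e.\ $\min_j\textup{ord}_q\phi_j=-2$.
\end{itemize}
This is the step your proof needs in place of the standing assumption.
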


Thus, if the Weierstrass data in $H^0(\Sigma_g, C\otimes [2q_1+2q_2+\cdots+2q_k])$ satisfy condition (\ref{OrdRes}) at a puncture, the corresponding end is an embedded planar end. Otherwise, the end may have nonzero logarithmic growth.

For each puncture $q_j$, choose
\begin{align}\label{shshsh}
\eta_j\in H^0(\Sigma_g,C\otimes [2q_j])\setminus H^0(\Sigma_g,C).
\end{align}
Since $\eta_j$ has a pole only at $q_j$, the residue formula implies that $\eta_j$ has vanishing residues and
\begin{align*}
\textup{ord}_{q_j}\eta_j=-2.
\end{align*}
Additionally, for $2\leq j\leq k$, choose
\begin{align}\label{hshshs}
\eta_{1j}\in H^0(\Sigma_g,C\otimes [2q_1+2q_j])\setminus\left(H^0(\Sigma_g,C)\oplus\mathbb{C}\eta_1\oplus\mathbb{C}\eta_j\right). 
\end{align}
By the definition of $\eta_{1j}$ and the residue formula, we have
\begin{align*}
\textup{res}_{q_1}\eta_{1j}=-\textup{res}_{q_j}\eta_{1j}\neq 0.
\end{align*}
Using the Riemann--Roch formula, $H^0(\Sigma_g, C\otimes [2q_1+2q_2+\cdots+2q_k])$ has dimension $g+2k-1$. Hence, it can be decomposed as
\begin{align*}
H^0(\Sigma_g, C)\oplus\left(\bigoplus_{1\leq j\leq k}\mathbb{C}\eta_j\right)\oplus\left(\bigoplus_{2\leq j\leq k}\mathbb{C}\eta_{1j}\right).
\end{align*}

We now specialize to the case of two ends with nonzero logarithmic growth and one embedded planar end, which is the main focus of this paper. Suppose that the Weierstrass data define a minimal immersion with ends of nonzero logarithmic growth at $q_1$ and $q_3$ and an embedded planar end at $q_2$. Then the residues at $q_2$ must vanish, forcing the coefficient of $\eta_{12}$ in each component to be zero. Consequently, each component of the Weierstrass data belongs to 
\begin{align}\label{guide}
H^0(\Sigma_g,C)\oplus\mathbb{C}\eta_1\oplus\mathbb{C}\eta_2\oplus\mathbb{C}\eta_3\oplus\mathbb{C}\eta_{13}.
\end{align}
This observation will guide our construction of the Weierstrass data.
\section{The Hoffman--Osserman Classification and Symmetries}\label{sam33}
\setcounter{equation}{0}
We first recall the classification of doubly connected minimal surfaces in $\mathbb{R}^n$ with total curvature $-4\pi$ by Hoffman and Osserman.

\begin{prop}[Proposition $6.6$ in \cite{HO}]
Let $S$ be a doubly connected minimal surface in $\mathbb{R}^n$ with total curvature $-4\pi$. If $S$ is complete and regular, then it lies in some $5$-dimensional affine subspace and is of one of the two following forms:
\begin{enumerate}
\item $S$ is given by
\begin{align*}
X = \textup{Re} \left( d_1 \xi - \frac{c}{\xi},  d_2 \xi - \frac{ic}{\xi},  \alpha \log \xi,  d_4 \xi, d_5 \xi \right)
\end{align*}
in $0 < |\xi| < \infty$, where $c$ is a nonzero complex constant, $d_4,d_5$ are arbitrary complex constants, $\alpha$ is an arbitrary positive real constant, and $d_1, d_2$ are defined by
\begin{align*}
d_1 = \frac{c}{\alpha^2}\left(d_4^2 + d_5^2\right) - \frac{\alpha^2}{4c},\quad d_2 = i\left(\frac{c}{\alpha^2}\left(d_4^2 + d_5^2\right) + \frac{\alpha^2}{4c} \right).
\end{align*}

\item $S$ lies in a $4$-dimensional subspace and in terms of suitable coordinates, $S$ is the graph in $\mathbb{C}^2$ of the function
\begin{align*}
w = \alpha z + \frac{b}{z}, \quad b \neq 0, \quad 0 < |z| < \infty.
\end{align*}
\end{enumerate}
\end{prop}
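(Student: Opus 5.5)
The plan is to reprove the Hoffman--Osserman classification directly from the generalized Weierstrass representation (\ref{gwr}). A complete doubly connected minimal surface of finite total curvature is conformally a punctured sphere, so it is parametrized by $\xi\in\mathbb{C}\setminus\{0\}$. The Weierstrass data $\phi_j=\varphi_j(\xi)\,d\xi$ are then rational $1$-forms whose poles lie only at $0$ and $\infty$.

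First, I would use total curvature $-4\pi$. By the Chern--Osserman formula for surfaces in $\mathbb{R}^n$, the total curvature equals $-2\pi$ times the degree of the generalized Gauss map $[\phi_1,\dots,\phi_n]:\mathbb{P}^1\to Q_{n-2}\subset\mathbb{P}^{n-1}$. Hence this map has degree $2$. The equality case of the Gackstatter--Jorge--Meeks type formula (total curvature $=2\pi(\chi-\sum(\text{end order}))$) with $\chi=0$ forces both ends to be embedded of order one. In the language of Subsection \ref{emenmulone}, each $\phi_j$ lies in $H^0(\mathbb{P}^1,C\otimes[2\cdot 0+2\infty])$, which has dimension $g+2k-1=3$ and is spanned by $d\xi/\xi^2$, $d\xi/\xi$ and $d\xi$. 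Therefore
\begin{align*}
\phi=\left(\frac{A}{\xi^2}+\frac{B}{\xi}+D\right)d\xi,\qquad A,B,D\in\mathbb{C}^n.
\end{align*}
The Gauss map $\xi\mapsto[A+B\xi+D\xi^2]$ is a conic of degree $2$. In particular its image spans at most a $\mathbb{P}^2$, which is why the surface lies in a $5$-dimensional (real) space once we take real and imaginary parts of $\mathrm{span}_{\mathbb{C}}\{A,B,D\}$ together with the period conditions.

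Next, I would impose the remaining conditions. Conformality $\sum\phi_j^2=0$ gives $A\cdot A=0$, $A\cdot B=0$, $B\cdot B+2A\cdot D=0$, $B\cdot D=0$ and $D\cdot D=0$, where $\cdot$ denotes the complex bilinear product. The period condition around the puncture forces $B\in i\mathbb{R}^n$, so $B=-i\,\alpha' e$ with $e$ a real unit vector. Regularity requires the polynomial $A+B\xi+D\xi^2$ never to vanish on $\mathbb{C}^*$. The case split then goes as follows.

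If $B\neq 0$, rotate so that $B$ points along $e_3$. Then $A\cdot e_3=D\cdot e_3=0$, $A$ and $D$ are isotropic, and $2A\cdot D=-B\cdot B=\alpha^2$ up to normalization. I would use the rotations in the orthogonal complement of $e_3$ to put the isotropic vector $A$ in the form $(-c,-ic,0,0,\dots)$. After that, $D$ is isotropic with prescribed product against $A$. Solving these linear and quadratic constraints yields exactly the stated $d_1,d_2$ in terms of $d_4,d_5$, after a further rotation that collects the remaining components of $D$ into two coordinates. This rotation uses the real and imaginary parts of the residual vector and is where the count $5$ appears.

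If $B=0$, there is no logarithmic term, and the conditions reduce to $A,D$ isotropic with $A\cdot D=0$. The real span of $A,\bar A,D,\bar D$ is then at most $4$-dimensional. Choosing complex coordinates adapted to this span, I would show that the surface is the holomorphic graph $w=\alpha z+b/z$, with $b\neq 0$ coming from $A\neq0$.

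The main obstacle I expect is the normal-form step in the case $B\neq0$. One has to justify that the complex orthogonal-type constraints can be simultaneously normalized by a real rotation in $SO(n)$, not merely a complex one. This requires a careful treatment of the real and imaginary parts of the isotropic vectors $A$ and $D$, including the degenerate subcase where $\operatorname{Re}D$ and $\operatorname{Im}D$ are dependent on those of $A$.
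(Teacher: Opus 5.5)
The paper gives no proof of this proposition. It is quoted from Hoffman--Osserman, with a typo in the statement corrected, so there is no in-paper argument to compare against. Your route is the natural one and works in outline: the degree count gives $\phi=(A\xi^{-2}+B\xi^{-1}+D)\,d\xi$, conformality gives five relations, and you split on whether the residue $B$ vanishes. Two steps need repair.

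\textbf{The period condition is backwards.} The period of $\phi$ around $\xi=0$ is $2\pi i B$, so vanishing real periods force $\operatorname{Im}B=0$, i.e.\ $B\in\mathbb{R}^n$. With your $B=-i\alpha' e$, the coordinate along $e$ would be $\mathrm{Re}(-i\alpha'\log\xi)=\alpha'\arg\xi$. That is multivalued, so the term $\alpha\log\xi$ of the target form cannot arise.

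With the correct $B=\alpha e_3$, $\alpha>0$, conformality gives $2A\cdot D=-\alpha^2$. Take $A=(c,ic,0,\dots,0)$; this is the sign matching $-c/\xi$ in $X$. Let $D'$ be the component of $D$ orthogonal to $e_1,e_2,e_3$. Then $2A\cdot D=-\alpha^2$ and $D\cdot D=0$ give $d_1+id_2=-\alpha^2/(2c)$ and $d_1-id_2=\frac{2c}{\alpha^2}\,D'\cdot D'$. A rotation of $\{e_1,e_2,e_3\}^{\perp}$, which fixes $A$ and $B$, moves $\mathrm{Re}\,D'$ and $\mathrm{Im}\,D'$ into $\mathrm{span}\{e_4,e_5\}$. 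This yields exactly the stated $d_1,d_2$.

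That is the entire normalization, and every step is a real rotation. The real and imaginary parts of the isotropic vector $A\neq 0$ are orthogonal of equal length. The $e_1,e_2$-components of $D$ are forced by the constraints rather than normalized. So the step you flag as the main obstacle is not one, and no degenerate subcase occurs. Regularity is automatic in this case, since the $e_3$-component of $A+B\xi+D\xi^2$ is $\alpha\xi$.

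\textbf{In the case $B=0$, the key idea is missing.} Behind ``choosing complex coordinates adapted to this span'' you need the following argument.
\begin{enumerate}
\item $V=\mathrm{span}_{\mathbb{C}}\{A,D\}$ is totally isotropic. Hence $V\cap\overline{V}=0$, since $v,\bar v\in V$ gives $v\cdot\bar v=0$. Also $|\mathrm{Re}\,v|^2=\tfrac12\,v\cdot\bar v$ for $v\in V$.
\item So $v\mapsto\sqrt2\,\mathrm{Re}\,v$ is an isometry of $V$, with Hermitian product $u\cdot\bar v$, onto the real span $W$ of $\mathrm{Re}A,\mathrm{Im}A,\mathrm{Re}D,\mathrm{Im}D$.
\item Transporting multiplication by $i$ gives an orthogonal complex structure on $W$. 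In it, $X$ is, up to a constant factor, the holomorphic curve $\xi\mapsto D\xi-A/\xi$.
\item Choose a unitary frame of $V$ with $A=(0,\|A\|)$, where $\|A\|=(A\cdot\bar A)^{1/2}$, and write $D=(\delta_1,\delta_2)$. Then $z=\delta_1\xi$ and $w=(\delta_2/\delta_1)z-\|A\|\delta_1/z$, so $b=-\|A\|\delta_1$.
\end{enumerate}
Hence $b\neq0$ does not follow from $A\neq0$ alone. It requires $D\notin\mathbb{C}A$, and this is precisely where regularity is used. If $D=\lambda A$ (with $\lambda\neq0$ because $D\neq0$), then $A+D\xi^2=(1+\lambda\xi^2)A$ vanishes at two points of $\mathbb{C}\setminus\{0\}$. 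The same fact gives $\dim_{\mathbb{R}}W=4$.
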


As noted in \cite{HO}, all surfaces appearing in the above classification are foliated by ellipses given by $|\xi|=\mathrm{constant}$. Moreover, Case $(1)$ is further divided into four subcases:
\begin{enumerate}
\item[(1.a)] If $d_4=d_5=0$, then $S$ is a catenoid in $\mathbb{R}^3$.
\item[(1.b)] If $d_4,d_5$ are not both zero but are linearly dependent over $\mathbb{R}$, then $S$ is contained in a $4$-dimensional affine subspace.
\item[(1.c)] If $d_4,d_5$ are not both zero and $d_4^2+d_5^2=0$, then $S$ is not contained in any proper affine subspace of $\mathbb{R}^5$.
\item[(1.d)] If $d_4,d_5$ are linearly independent over $\mathbb{R}$ and $d_4^2+d_5^2\neq 0$, then $S$ is not contained in any proper affine subspace of $\mathbb{R}^5$.
\end{enumerate}
We remark that the circle-foliated surfaces are precisely those in Cases (1.a) and (1.c), together with those in Case $(2)$ with $\alpha=0$.

\begin{rmk}
\normalfont
There is a minor typo in the statement of Proposition $6.6$ in \cite{HO}, so we present the corrected version here. The corresponding expression, however, appears correctly in its proof. 
\end{rmk}
\begin{rmk}
\normalfont
The surfaces in Case $(1)$ depend on the parameters $\alpha, c, d_4, d_5$. We note that the parameter choices $(\alpha, c, d_4,d_5)$ and $(\alpha',c',d_4',d_5')$ yield the same surface up to rigid motions and scaling if and only if
\begin{align*}
\frac{|c|^2}{\alpha^4}\left|d_4^2+d_5^2\right|=\frac{|c'|^2}{\alpha'^4}\left|d_4'^2+d_5'^2\right|,\quad \frac{|c|^2}{\alpha^4}\left(|d_4|^2+|d_5|^2\right)=\frac{|c'|^2}{\alpha'^4}\left(|d_4'|^2+|d_5'|^2\right).
\end{align*}
Consequently, these surfaces form a two-parameter family and admit a simpler representation. Nevertheless, we retain the notation of \cite{HO} for consistency and to allow greater flexibility in choosing parameters for later applications.
\end{rmk}

\subsection{Number of symmetries}\label{ss313131}
Since the surfaces in Case $(2)$ and the symmetries associated with them have been studied in \cite{JLEY2}, we now consider the surfaces in Case $(1)$. Their Weierstrass data are given by
\begin{align*}
\left(\phi_1,\phi_2,\phi_3,\phi_4, \phi_5 \right)=2X_{\xi}d\xi=\left(\left(d_1+\frac{c}{\xi^2}\right)d\xi, \left(d_2+\frac{ic}{\xi^2}\right)d\xi, \frac{\alpha}{\xi}d\xi, d_4d\xi, d_5d\xi\right),
\end{align*}
and the generalized Gauss map $\Phi:\mathbb{C}\setminus\{0\}\to\mathbb{P}^4$ is given by
\begin{align*}
\Phi (\xi)=\left[ d_1+\frac{c}{\xi^2}, d_2+\frac{ic}{\xi^2}, \frac{\alpha}{\xi}, d_4, d_5\right]\in\mathbb{P}^4.
\end{align*}
In particular, the limiting values of $\Phi$ at $\xi=0$ and $\xi=\infty$ are
\begin{align*}
[1,i,0,0,0]\quad\text{and}\quad \left[d_1,d_2,0,d_4,d_5\right],
\end{align*}
respectively.

For a surface $S\subset\mathbb{R}^5$, a symmetry of $S$ is a rigid motion $R=(A,b)$ of $\mathbb{R}^5$ satisfying $R(S)=S$, where $A\in O(5)$ and $b\in\mathbb{R}^5$. Here $R$ acts on $\mathbb{R}^5$ by $R(x)=Ax+b$. The symmetries of $S$ form a group. Furthermore, if $S$ admits no nontrivial translational symmetry, then each symmetry is uniquely determined by its linear part.

In what follows, we focus on the surfaces in Case (1.d), which are not contained in any proper affine subspace of $\mathbb{R}^5$ and are foliated by ellipses rather than circles. We show that the number of symmetries of their possible desingularizations is bounded. This rules out Case (1.d) for our construction, since we seek surfaces whose number of symmetries increases with genus, as in the Costa--Hoffman--Meeks surfaces. The surfaces in Case (1.b), which lie in a four-dimensional affine subspace, can be ruled out using a similar argument in $\mathbb{R}^4$ after a suitable rigid motion. 

Suppose that there exists a minimal surface $S\subset\mathbb{R}^5$ that desingularizes the union of a surface in Case (1.d) and the plane containing one of its ellipses. Any symmetry of $S$ must either preserve or interchange its two ends of nonzero logarithmic growth. Hence, the subgroup of symmetries preserving each end has index at most two in the full symmetry group. For any such symmetry $R=(A,b)$, its linear part $A$ must preserve the limiting tangent planes at these two ends.

To preserve the limiting tangent plane corresponding to $[1,i,0,0,0]\in\mathbb{P}^4$, $A$ must have the form
\begin{align*}
A=
\begin{pmatrix}
O_1 & \\
& O_2
\end{pmatrix}
\end{align*}
with respect to the standard basis of $\mathbb{R}^5$, where $O_1 \in O(2)$ and $O_2 \in O(3)$. Since $d_1^2+d_2^2+d_4^2+d_5^2=0$, the two vectors
\begin{align*}
\left(\textup{Re}\ d_1,\textup{Re}\ d_2,0,\textup{Re}\ d_4,\textup{Re}\ d_5\right),\quad\left(\textup{Im}\ d_1,\textup{Im}\ d_2,0,\textup{Im}\ d_4,\textup{Im}\ d_5\right) 
\end{align*}
are orthogonal and have the same norm. Thus, the invariance of the limiting tangent plane corresponding to $\left[d_1,d_2,0,d_4,d_5\right]\in\mathbb{P}^4$ implies that
\begin{align}\label{Aeqeqeq}
A\begin{pmatrix}
\textup{Re}\ d_1 & \textup{Im}\ d_1\\
\textup{Re}\ d_2 & \textup{Im}\ d_2\\
0 & 0\\
\textup{Re}\ d_4 & \textup{Im}\ d_4\\
\textup{Re}\ d_5 & \textup{Im}\ d_5
\end{pmatrix}=\begin{pmatrix}
\textup{Re}\ d_1 & \textup{Im}\ d_1\\
\textup{Re}\ d_2 & \textup{Im}\ d_2\\
0 & 0\\
\textup{Re}\ d_4 & \textup{Im}\ d_4\\
\textup{Re}\ d_5 & \textup{Im}\ d_5
\end{pmatrix}\begin{pmatrix}
\cos\eta & -\sin\eta\\
\epsilon\sin\eta & \epsilon\cos\eta
\end{pmatrix}
\end{align}
for some $\eta\in[0,2\pi)$ and $\epsilon\in\{-1,+1\}$. In particular,
\begin{align}\label{O1EQO1EQ}
O_1\begin{pmatrix}
\textup{Re}\ d_1 & \textup{Im}\ d_1\\
\textup{Re}\ d_2 & \textup{Im}\ d_2
\end{pmatrix}=\begin{pmatrix}
\textup{Re}\ d_1 & \textup{Im}\ d_1\\
\textup{Re}\ d_2 & \textup{Im}\ d_2
\end{pmatrix}\begin{pmatrix}
\cos\eta & -\sin\eta\\
\epsilon\sin\eta & \epsilon\cos\eta
\end{pmatrix}
\end{align}
and
\begin{align}\label{O2EQO2EQ}
O_2\begin{pmatrix}
0 & 0\\
\textup{Re}\ d_4 & \textup{Im}\ d_4\\
\textup{Re}\ d_5 & \textup{Im}\ d_5
\end{pmatrix}=\begin{pmatrix}
0 & 0\\
\textup{Re}\ d_4 & \textup{Im}\ d_4\\
\textup{Re}\ d_5 & \textup{Im}\ d_5
\end{pmatrix}\begin{pmatrix}
\cos\eta & -\sin\eta\\
\epsilon\sin\eta & \epsilon\cos\eta
\end{pmatrix}.
\end{align}

Comparing the norms of the corresponding column vectors on both sides of each matrix equation, we obtain the same relation
\begin{align*}
\sin^2\eta\ \textup{Re}\left(d_1^2+d_2^2\right)=\epsilon\cos\eta\sin\eta\ \textup{Im}\left(d_1^2+d_2^2\right).
\end{align*}
In Case (1.d), we have $d_1^2+d_2^2\neq 0$. Thus, for each fixed $\epsilon\in\{-1,+1\}$, there are at most four values of $\eta\in[0,2\pi)$ satisfying this equation. Moreover, since $d_4$ and $d_5$ are linearly independent over $\mathbb{R}$, the vectors $\left(0, \textup{Re}\ d_4, \textup{Re}\ d_5\right)$ and $\left(0, \textup{Im}\ d_4, \textup{Im}\ d_5\right)$ are linearly independent. Hence, the number of possible matrices $O_2$ satisfying (\ref{O2EQO2EQ}) is bounded. 

On the other hand, since the matrix 
\begin{align*}
\begin{pmatrix}
\textup{Re}\ d_1 & \textup{Im}\ d_1\\
\textup{Re}\ d_2 & \textup{Im}\ d_2
\end{pmatrix}
\end{align*}
is nonzero in (\ref{O1EQO1EQ}), there are at most two possible matrices $O_1\in O(2)$ satisfying (\ref{O1EQO1EQ}) for each fixed $\eta$ and $\epsilon$. Consequently, the number of possible matrices $A$ is bounded. Since the subgroup of end-preserving symmetries has index at most two, the order of the full symmetry group is also bounded in Case (1.d). 

Thus, the only remaining possibilities for constructing surfaces whose number of symmetries increases with genus are Cases (1.a) and (1.c), which are precisely the circle-foliated surfaces in Case $(1)$.

\subsection{The Hoffman--Osserman catenoids $\mathsf{HO}(c,f,\alpha)$}\label{dubidubab}
We now consider the circle-foliated surfaces in Case (1.c). Up to a reflection, we may assume that 
\begin{align*}
d_4=f,\quad d_5=if
\end{align*}
for some $f\in\mathbb{C}\setminus\{0\}$. The surface is then parametrized by the immersion $X:\mathbb{C}\setminus\{0\}\to\mathbb{R}^5$ given by
\begin{align*}
X(\xi) = \textup{Re} \left( - \frac{\alpha^2}{4c} \xi - \frac{c}{\xi}, i \frac{\alpha^2}{4c} \xi - \frac{ic}{\xi},  \alpha \log \xi,  f \xi, i f \xi \right) \in\mathbb{R}^5,
\end{align*}
where $\alpha>0$ and $c,f\in\mathbb{C}\setminus\{0\}$. We call such a surface a Hoffman--Osserman catenoid and denote it by $\mathsf{HO}(c,f,\alpha)$. The limiting values of its generalized Gauss map at the two ends $\xi=0$ and $\xi=\infty$ are 
\begin{align*}
\left[ 1,i,0,0,0 \right] \quad \text{and} \quad \left[ - \frac{\alpha^2}{4c} , i \frac{\alpha^2}{4c},0,f,if \right],
\end{align*}
respectively.

Setting $\xi=re^{i\theta}$, we obtain
\begin{align*}
X(\xi)=\left(0, 0, \alpha\log r,0,0\right)+\cos\theta F_1(r)+\sin\theta F_2(r),
\end{align*}
where $F_1(r)$ and $F_2(r)$ are the orthogonal vectors of the same norm given by
\begin{align*}
F_1(r)&=\left(\frac{1}{r}+\frac{\alpha^2}{4|c|^2}r\right)\left(-\textup{Re}\ c,\textup{Im}\ c,0,0,0\right)+r\left(0,0,0,\textup{Re}\ f,-\textup{Im}\ f\right),\\
F_2(r)&=-\left(\frac{1}{r}+\frac{\alpha^2}{4|c|^2}r\right)\left(\textup{Im}\ c,\textup{Re}\ c,0,0,0\right)-r\left(0,0,0,\textup{Im}\ f,\textup{Re}\ f\right).
\end{align*}
When the parameters are clear from the context, we suppress the dependence on $c,f,\alpha$ in the notation.
The circles attain their minimum radius when
\begin{align*}
|\xi|=\left(\frac{16|c|^4}{16|c|^2|f|^2+\alpha^4}\right)^{\frac{1}{4}}=:r_{waist}.
\end{align*}
We denote the plane containing this waist circle by $\mathsf{P}(c,f,\alpha)$.

Suppose that there exists a minimal surface $S(c,f,\alpha)\subset\mathbb{R}^5$ with two ends of nonzero logarithmic growth whose limiting tangent planes coincide with those of $\mathsf{HO}(c,f,\alpha)$, and one planar end parallel to $\mathsf{P}(c,f,\alpha)$. We now determine the form of symmetries that preserve each end of $S(c,f,\alpha)$. 

Let $(A,b)$ be an end-preserving symmetry of $S(c,f,\alpha)$. By an argument similar to that in the previous subsection, $A$ has the form
\begin{align*}
A=\begin{pmatrix}
O_1 & \\
& O_2
\end{pmatrix}
\end{align*}
for some $O_1\in O(2)$ and $O_2\in O(3)$, and satisfies (\ref{Aeqeqeq}) for some $\eta\in[0,2\pi)$ and $\epsilon\in\{-1,+1\}$. Since
\begin{align*}
[d_1,d_2,0,d_4,d_5]=\left[ - \frac{\alpha^2}{4c} , i \frac{\alpha^2}{4c},0,f,if \right],
\end{align*}
equation (\ref{Aeqeqeq}) yields
\begin{align}\label{O1O1O1}
O_1\begin{pmatrix}
-\cos\theta_c & \sin\theta_c\\
\sin\theta_c & \cos\theta_c
\end{pmatrix}=\begin{pmatrix}
-\cos\theta_c & \sin\theta_c\\
\sin\theta_c & \cos\theta_c
\end{pmatrix}\begin{pmatrix}
\cos\eta & -\sin\eta\\
\epsilon\sin\eta & \epsilon\cos\eta
\end{pmatrix}
\end{align}
and
\begin{align}\label{O2O2O2}
O_2\begin{pmatrix}
0 & 0\\
\cos\theta_f & \sin\theta_f\\
-\sin\theta_f & \cos\theta_f
\end{pmatrix}=\begin{pmatrix}
0 & 0\\
\cos\theta_f & \sin\theta_f\\
-\sin\theta_f & \cos\theta_f
\end{pmatrix}\begin{pmatrix}
\cos\eta & -\sin\eta\\
\epsilon\sin\eta & \epsilon\cos\eta
\end{pmatrix},
\end{align}
where $c=|c|e^{i\theta_c}$ and $f=|f|e^{i\theta_f}$. It follows from (\ref{O2O2O2}) that
\begin{align*}
O_2=\begin{pmatrix}
\pm 1 & 0\\
0 & \widetilde{O}_2
\end{pmatrix},
\end{align*}
where $\widetilde{O}_2\in O(2)$. Substituting this back into the same equation, we obtain
\begin{align}\label{HAHAHA}
\widetilde{O}_2\begin{pmatrix}
\cos\theta_f & \sin\theta_f\\
-\sin\theta_f & \cos\theta_f
\end{pmatrix}=\begin{pmatrix}
\cos\theta_f & \sin\theta_f\\
-\sin\theta_f & \cos\theta_f
\end{pmatrix}\begin{pmatrix}
\cos\eta & -\sin\eta\\
\epsilon\sin\eta & \epsilon\cos\eta
\end{pmatrix}.
\end{align}

Recall that every $2\times 2$ real orthogonal matrix is one of the following forms:
\begin{align*} 
\mathcal{S}_{\theta}:=
\begin{pmatrix}
\cos\theta & -\sin\theta\\
\sin\theta & \cos\theta
\end{pmatrix},\quad 
\mathcal{T}_{\theta}:=
\begin{pmatrix}
\cos\theta & -\sin\theta\\
-\sin\theta & -\cos\theta
\end{pmatrix},
\quad \theta\in\mathbb{R}.
\end{align*}
These matrices satisfy the relations
\begin{align*}
\mathcal{S}_{\theta}\mathcal{T}_{\phi}=\mathcal{T}_{\phi-\theta},\quad \mathcal{T}_{\phi}\mathcal{S}_{\theta}=\mathcal{T}_{\phi+\theta},\quad \mathcal{S}_{\theta}\mathcal{S}_{\phi}=\mathcal{S}_{\theta+\phi},\quad \mathcal{T}_{\theta}\mathcal{T}_{\phi}=\mathcal{S}_{\phi-\theta}.
\end{align*}
If $\epsilon=1$, then (\ref{O1O1O1}) and (\ref{HAHAHA}) yield
\begin{align*}
O_1\mathcal{T}_{\theta_c}=\mathcal{T}_{\theta_c}\mathcal{S}_{\eta},\quad \widetilde{O}_2\mathcal{S}_{-\theta_f}=\mathcal{S}_{-\theta_f}\mathcal{S}_{\eta},
\end{align*}
and hence
\begin{align*}
O_1=\mathcal{S}_{-\eta},\quad \widetilde{O}_2=\mathcal{S}_{\eta}.
\end{align*}
Similarly, if $\epsilon=-1$, then (\ref{O1O1O1}) and (\ref{HAHAHA}) yield
\begin{align*}
O_1=\mathcal{T}_{2\theta_c-\eta},\quad \widetilde{O}_2=\mathcal{T}_{2\theta_f+\eta}.
\end{align*}
Consequently, $A$ takes one of the following forms for some $\eta\in[0,2\pi)$:
\begin{align}\label{aa}
\begin{pmatrix}
\mathcal{S}_{-\eta} &  \\
&\pm1& \\
 && \mathcal{S}_{\eta}
\end{pmatrix}
\quad\text{or}\quad
\begin{pmatrix}
\mathcal{T}_{2\theta_c-\eta} & & \\
& \pm 1 & \\
& & \mathcal{T}_{2\theta_f+\eta}
\end{pmatrix}.
\end{align}

A direct computation shows that these orthogonal matrices also preserve the plane spanned by $F_1(r_{waist})$ and $F_2(r_{waist})$, which is parallel to $\mathsf{P}(c,f,\alpha)$. Thus, the matrices in (\ref{aa}) give the possible forms of the linear parts of end-preserving symmetries of $S(c,f,\alpha)$.

\section{Weierstrass Data for $\mathsf{S_g}(c,f,\alpha)$}\label{WAWAWA}
\setcounter{equation}{0}
In this section, we present explicit Weierstrass data for the surfaces $\mathsf{S_g}(c,f,\alpha)$ in the main theorem. 
\subsection{Explicit Weierstrass data}\label{GUGUGU}
Let $\Sigma_g$ be the Riemann surface defined by
\begin{align*}
\Sigma_g=\left\{(z, w)\in\left(\mathbb{C}\cup\{\infty\}\right)\times\left(\mathbb{C}\cup\{\infty\}\right)\ \middle|\ w^{g+1}=z^g(z+1)(z-1)\right\}.
\end{align*}
We denote three points on $\Sigma_g$ by
\begin{align*}
q_1=(1,0),\quad q_2=\infty,\quad q_3=(-1,0).
\end{align*}
The Weierstrass data will be chosen so that $q_1$ and $q_3$ correspond to the two ends of nonzero logarithmic growth, while $q_2$ corresponds to the planar end.
Define
\begin{align*}
\eta_1:=\frac{dz}{w(z-1)},\quad \eta_2:=\frac{wdz}{(z+1)(z-1)},\quad \eta_3:=\frac{dz}{w(z+1)},
\end{align*}
and
\begin{align*}
\eta_{13}:=\frac{dz}{(z+1)(z-1)}
\end{align*}
on $\Sigma_g$. These meromorphic $1$-forms satisfy (\ref{shshsh}) and (\ref{hshshs}).

We introduce the constants
\begin{align*}
E_0&:=\int_0^1 \frac{\bigl(t^g(1-t^2)\bigr)^{\frac{1}{g+1}}}{t(1-t^2)}\,\textup{d}t ,
\quad
E_1
:=
\int_0^1
\frac{t}{\bigl(t^g(1-t^2)\bigr)^{\frac{1}{g+1}}(t+1)}\,\textup{d}t,\\
E_2&:=
\int_0^1
\frac{\bigl(t^g(1-t^2)\bigr)^{\frac{1}{g+1}}}{1-t^2}\,\textup{d}t ,
\quad
E_3:=
\int_0^1
\frac{1}{\bigl(t^g(1-t^2)\bigr)^{\frac{1}{g+1}}(t+1)}\,\textup{d}t .
\end{align*}
For each $\alpha>0$ and $c,f\in\mathbb{C}\setminus\{0\}$, define
\begin{align*}
B(c,f,\alpha):=\frac{E_1+E_3}{2|c|^2E_2}r_{waist}^2,\quad H(c,f,\alpha):=\frac{E_3-E_1}{2\overline{c}E_0}\left(1-\frac{\alpha^2}{4|c|^2}r_{waist}^2\right),
\end{align*}
where
\begin{align*}
r_{waist}=\left(\frac{16|c|^4}{16|c|^2|f|^2+\alpha^4}\right)^{\frac{1}{4}}.
\end{align*}
We also define $L(c,f,\alpha)>0$ by
\begin{align*}
L(c,f,\alpha)^2 = 8B(c,f,\alpha)\left( \frac{1}{r_{waist}^2} + \frac{\alpha^2}{4|c|^2} \right) - \frac{8H(c,f,\alpha)}{c}.
\end{align*}

We note that $L(c,f,\alpha)$ is well-defined. Indeed, substituting the expressions for $B$ and $H$ into the right-hand side yields
\begin{align*}
\frac{4}{|c|^2}\left[\left(\frac{E_1+E_3}{E_2}+\frac{E_3-E_1}{E_0}\right)\frac{\alpha^2 }{4|c|^2}r_{waist}^2+\left(\frac{E_1+E_3}{E_2}-\frac{E_3-E_1}{E_0}\right)\right].
\end{align*}
Since $E_0>E_2$ and $E_3+E_1>E_3-E_1>0$, this quantity is strictly positive.

For each triple $(c,f,\alpha)$ and the corresponding constants $B,H,L$ defined above, define the following meromorphic $1$-forms on $\Sigma_g$:
\begin{align*}
\phi_1(c,f,\alpha) &= H \left(\frac{z}{w}\right)^{g-1} \frac{dz}{w} 
        + \frac{1}{c} \eta_1 
        -c\left(\frac{1}{r_{waist}^2} + \frac{\alpha^2}{4|c|^2}\right)B \eta_2 
        - \frac{\alpha^2}{4|c|^2} \frac{r_{waist}^2}{c} \eta_3, \\
\phi_2(c,f,\alpha) &= iH \left(\frac{z}{w}\right)^{g-1} \frac{dz}{w} 
        - i\frac{1}{c} \eta_1 
        - i c \left(\frac{1}{r_{waist}^2} + \frac{\alpha^2}{4|c|^2}\right)B \eta_2 
        + i \frac{\alpha^2}{4|c|^2} \frac{r_{waist}^2}{c} \eta_3, \\
\phi_3(c,f,\alpha) &= L  \eta_{13}, \\
\phi_4(c,f,\alpha) &= \frac{\overline{c}}{f} \left(\frac{1}{r_{waist}^2} + \frac{\alpha^2}{4|c|^2}\right)
           H \left(\frac{z}{w}\right)^{g-1} \frac{dz}{w}
          + \overline{f} B\eta_2
          + \frac{f}{|c|^2} r_{waist}^2 \eta_3, \\
\phi_5(c,f,\alpha) &= -i \frac{\overline{c}}{f} \left(\frac{1}{r_{waist}^2} + \frac{\alpha^2}{4|c|^2}\right) 
           H \left(\frac{z}{w}\right)^{g-1} \frac{dz}{w}
           - i  \overline{f} B \eta_2
           + i \frac{f}{|c|^2} r_{waist}^2  \eta_3.
\end{align*}

The above meromorphic $1$-forms have no common zeros on $\Sigma_g$. Indeed, we have
\begin{align*}
\phi_4(c,f,\alpha)-i\phi_5(c,f,\alpha)=\frac{2f}{|c|^2}r_{waist}^2\eta_3.
\end{align*}
Thus, any common zero must be a zero of $\eta_3$. Comparing the zeros of $\eta_3$ with those of $\phi_3(c,f,\alpha)=L\eta_{13}$, we see that the only possible common zero is $\infty$. However, at $\infty$, 
\begin{align*}
\left(\frac{z}{w}\right)^{g-1}\frac{dz}{w}\quad\text{and}\quad \eta_3
\end{align*}
are holomorphic, whereas $\eta_2$ has a pole of order two. Consequently, $\phi_4(c,f,\alpha)$ has a pole at $\infty$, ruling out the only possible common zero.
 
In Section \ref{main}, we will show that the Weierstrass representation determined by these forms yields a well-defined minimal immersion of $\Sigma_g\setminus\{q_1,q_2,q_3\}$ into $\mathbb{R}^5$, whose image is a surface $\mathsf{S_g}(c,f,\alpha)$ satisfying all the properties stated in the main theorem.

\subsection{Idea of constructing Weierstrass data}
When the underlying Riemann surface $\Sigma_g$ is explicitly given, one can use (\ref{guide}) to search for suitable Weierstrass data. In the characterization of the Costa--Hoffman--Meeks surfaces in $\mathbb{R}^3$ \cite{HM1}, the underlying Riemann surface is determined by assuming sufficiently many symmetries, for instance, at least $4(g+1)$. Indeed, let $H$ denote the group of orientation-preserving symmetries that fix each end. Then $H$ is a finite cyclic group. Under this symmetry assumption, the Riemann--Hurwitz formula shows that the quotient $\Sigma_g/H$ is a sphere. Moreover, in codimension one, embeddedness determines the orientation of each end, which in turn determines the rotation numbers associated with the action of $H$. This allows one to identify a cyclic branched covering equivalent to $\Sigma_g\to\Sigma_g/H$.

A similar argument can be applied in higher codimensions. However, embeddedness no longer uniquely determines the orientations of the ends, leaving several possibilities for the underlying Riemann surface. In \cite{JLEY2}, which addresses the desingularization problem for the Lagrangian catenoid in $\mathbb{R}^4$, all possible cases were studied, leading to nonexistence results under certain assumptions on the number of symmetries. In the present paper, rather than investigating all possible cases, we focus on establishing existence by fixing the underlying Riemann surface to be the same as that of the Costa--Hoffman--Meeks surfaces, namely,
\begin{align*}
\Sigma_g=\left\{(z, w)\in\left(\mathbb{C}\cup\{\infty\}\right)\times\left(\mathbb{C}\cup\{\infty\}\right)\ \middle|\ w^{g+1}=z^g(z+1)(z-1)\right\}.
\end{align*}
We also require that the intrinsic symmetries induced by ambient isometries agree with those of the Costa--Hoffman--Meeks surfaces.

On the Riemann surface $\Sigma_g$ defined above, we choose the meromorphic $1$-forms $\eta_1,\eta_2,\eta_3,\eta_{13}$ as in Section \ref{GUGUGU}, which satisfy conditions (\ref{shshsh}) and (\ref{hshshs}). Moreover, the holomorphic $1$-forms on $\Sigma_g$ are given by
\begin{align*}
h\left(\frac{z}{w}\right)\frac{dz}{w}
\end{align*}
for some polynomial $h(t)$ of degree at most $g-1$. Thus, by (\ref{guide}), we may seek suitable Weierstrass data as linear combinations of these differentials. Since each $\eta_j$ contributes the leading-order term at the corresponding puncture $q_j$, its coefficient vector determines the image of $q_j$ under the generalized Gauss map. As we are interested in the Hoffman--Osserman catenoid $\mathsf{HO}(c,f,\alpha)$ and the plane $\mathsf{P}(c,f,\alpha)$ containing its waist circle, we can use their geometric data to determine these coefficient vectors, possibly up to complex conjugation depending on the orientations of the ends.

To resolve this ambiguity, we apply the symmetry argument discussed above. Let $H$ denote the group of orientation-preserving symmetries that fix each end, and consider the case in which the surface has at least $4(g+1)$ symmetries. Under this assumption, the quotient $\Sigma_g/H$ is a sphere, and the quotient map $\Sigma_g\to\Sigma_g/H$ is equivalent to a cyclic branched covering of $\mathbb{S}^2$. Such a covering is determined by the rotation numbers associated with the action of $H$.

In our setting, however, the underlying Riemann surface $\Sigma_g$ and the corresponding quotient map have already been fixed. We can therefore reverse the argument and use the corresponding rotation numbers to determine the orientations of the ends. This further restricts the coefficient vectors of $\eta_1,\eta_2,\eta_3$ in the Weierstrass data. For details on the computation of rotation numbers in higher codimensions, we refer the reader to \cite[Section $5$]{JLEY2}.

However, considerable freedom remains in the coefficient vector of $\eta_{13}$ and the holomorphic differential part of the Weierstrass data. To narrow down these possibilities, we further restrict the coefficients by requiring the symmetry relations established in Lemma \ref{lambda} and Lemma \ref{kappa}. Different choices of the ambient isometries $\Lambda$ and $K$, however, lead to different possibilities. Guided by the symmetries of the Costa--Hoffman--Meeks surfaces, we define $\Lambda_{c,f,\alpha}$ using the reflection $U_{c,f,\alpha}$ across the plane $\mathsf{P}_0(c,f,\alpha)$ through the origin and parallel to $\mathsf{P}(c,f,\alpha)$, and choose $K_{c,f}$ accordingly. After simplifying the coefficients in this way, we impose the remaining conditions on the Weierstrass data, including conformality and the period conditions, to obtain their final form.

\section{Proof of the Main Theorem}\label{main}
\setcounter{equation}{0}
In this section, we prove that the Weierstrass data constructed in the previous section give rise to well-defined minimal immersions satisfying all the properties of $\mathsf{S_g}(c,f,\alpha)$ stated in the main theorem. The proof is organized into several subsections.

\subsection{The conformality condition}
We verify that the sum of the squares of the meromorphic $1$-forms $\phi_j(c,f,\alpha)$ defined in Section \ref{GUGUGU} vanishes identically. 

\begin{lemma}\label{squared sum}
The meromorphic $1$-forms $\phi_k(c,f,\alpha)$ defined in Section \ref{GUGUGU} satisfy the conformality condition
\begin{align*}
\sum_{k=1}^5\phi_k(c,f,\alpha)^2\equiv 0
\end{align*}
for every $\alpha>0$ and $c,f\in\mathbb{C}\setminus\{0\}$.
\end{lemma}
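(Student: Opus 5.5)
The plan is to pair the coordinates so that each pair becomes a product. Write $\omega:=\left(\frac{z}{w}\right)^{g-1}\frac{dz}{w}$ and $\rho:=\frac{1}{r_{waist}^2}+\frac{\alpha^2}{4|c|^2}$. By construction $\phi_2$ is obtained from $\phi_1$ by multiplying the $\omega,\eta_2$ terms by $i$ and the $\eta_1,\eta_3$ terms by $-i$. Hence $\phi_1^2+\phi_2^2=(\phi_1+i\phi_2)(\phi_1-i\phi_2)$, where
\begin{align*}
\phi_1+i\phi_2=\frac{2}{c}\eta_1-\frac{\alpha^2 r_{waist}^2}{2|c|^2c}\eta_3,\qquad \phi_1-i\phi_2=2H\omega-2c\rho B\eta_2 .
\end{align*}
The same trick applies to the fourth and fifth coordinates. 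We already have $\phi_4-i\phi_5=\frac{2f}{|c|^2}r_{waist}^2\eta_3$, and
\begin{align*}
\phi_4+i\phi_5=\frac{2\overline{c}}{f}\rho H\omega+2\overline{f}B\eta_2 .
\end{align*}
So the lemma reduces to a single identity:
\begin{align*}
(\phi_1+i\phi_2)(\phi_1-i\phi_2)+(\phi_4+i\phi_5)(\phi_4-i\phi_5)+L^2\eta_{13}^2=0.
\end{align*}

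Next we expand this into products of the basic differentials. The products that appear are $\eta_1\omega$, $\eta_1\eta_2$, $\eta_3\omega$, $\eta_3\eta_2$ and $\eta_{13}^2$. Using $w^{g+1}=z^g(z^2-1)$, each of them becomes a rational function of $z$ times $dz^2$:
\begin{align*}
\eta_1\eta_2=\frac{dz^2}{(z-1)^2(z+1)},\quad \eta_3\eta_2=\frac{dz^2}{(z+1)^2(z-1)},\quad \eta_{13}^2=\frac{dz^2}{(z^2-1)^2}.
\end{align*}
The $\omega$ terms need more care. We have $\omega=z^{g-1}dz/w^g$, so
\begin{align*}
\eta_1\omega=\frac{z^{g-1}dz^2}{w^{g+1}(z-1)}=\frac{dz^2}{z(z-1)^2(z+1)},\qquad \eta_3\omega=\frac{dz^2}{z(z+1)^2(z-1)}.
\end{align*}
Multiplying the whole identity by $(z^2-1)^2$ should then leave a rational function in $z$ with at most a pole at $z=0$.

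The $\omega$ contributions combine to $\frac{\text{const}}{z}\left[\frac{z+1}{1}\cdots\right]$. More precisely, with coefficients $a$ for $\eta_1\omega$ and $b$ for $\eta_3\omega$, they give $\frac{a(z+1)+b(z-1)}{z}$. For the $1/z$ pole to cancel we need $a=b$; the $\eta_2$ terms contribute $(z+1)$- and $(z-1)$-type polynomials, and the $\eta_{13}^2$ term contributes a constant.

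So the proof becomes bookkeeping of the coefficients. The $\eta_1\omega$ coefficient is $\frac{4H}{c}$, and the $\eta_3\omega$ coefficient is
\begin{align*}
-\frac{\alpha^2r_{waist}^2H}{|c|^2c}+\frac{4\overline{c}\rho H r_{waist}^2}{|c|^2}=\frac{4H}{c}\Bigl(-\frac{\alpha^2r_{waist}^2}{4|c|^2}+1+\frac{\alpha^2 r_{waist}^2}{4|c|^2}\Bigr)=\frac{4H}{c},
\end{align*}
so they agree and the $\omega$ terms give $\frac{4H}{c}\cdot 2\,dz^2$ (after multiplying by $(z^2-1)^2$). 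The $\eta_1\eta_2$ coefficient is $-4\rho B$, contributing $-4\rho B(z+1)$. For the $\eta_3\eta_2$ coefficient we use
\begin{align*}
\frac{\alpha^2r_{waist}^2\rho B}{|c|^2}+\frac{4B r_{waist}^2|f|^2}{|c|^2}.
\end{align*}
This should equal $4\rho B$; that requires $\alpha^2 r^2\rho+4r^2|f|^2=4|c|^2\rho$, i.e. $\frac{\alpha^2}{r^2}\cdot\frac{r^4}{1}\cdots$.

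Checking this identity is the main obstacle, and it is exactly the defining equation of $r_{waist}$. Substituting $\rho$ gives
\begin{align*}
\alpha^2+\frac{\alpha^4r^2}{4|c|^2}+4r^2|f|^2=\frac{4|c|^2}{r^2}+\alpha^2.
\end{align*}
This is equivalent to $r^4(\alpha^4+16|c|^2|f|^2)=16|c|^4$, which holds by the definition of $r_{waist}$. With it, the $\eta_2$ terms sum to $-4\rho B(z+1)+4\rho B(z-1)=-8\rho B$. The total is then $\frac{8H}{c}-8\rho B+L^2$, which vanishes by the definition of $L^2$.
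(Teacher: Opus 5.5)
Your proof is correct and is essentially the paper's direct computation. The factorization $\phi_1^2+\phi_2^2=(\phi_1+i\phi_2)(\phi_1-i\phi_2)$, and likewise for $\phi_4,\phi_5$, just organizes the expansion that the paper states without detail. Your three checks match the paper's vanishing coefficients over the common denominator $z(z+1)^2(z-1)^2$: cancellation of the $1/z$ pole is the constant term, your identity $\alpha^2r_{waist}^2\rho+4r_{waist}^2|f|^2=4|c|^2\rho$ is the $z^2$ coefficient, and the final use of the definition of $L$ is the $z$ coefficient.
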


\begin{proof}
By direct calculation, we have
\begin{align*}
\sum_{k=1}^{5} \phi_k(c,f,\alpha)^2
=
&\left[L^2 \frac{1}{(z+1)^2 (z-1)^2}+\frac{4H}{c} \frac{1}{z(z+1)(z-1)^2}\right.\\
&- \frac{\alpha^2}{|c|^2} \frac{r_{waist}^2}{c} H \frac{1}{z(z+1)^2 (z-1)}
\\
&- 4B \left( \frac{1}{r_{waist}^2} + \frac{\alpha^2}{4|c|^2}\right)
\frac{1}{(z+1)(z-1)^2}
\\
&+ B \frac{\alpha^2}{|c|^2}r_{waist}^2
\left( \frac{1}{r_{waist}^2} + \frac{\alpha^2}{4|c|^2} \right)
\frac{1}{(z+1)^2 (z-1)}\\
&+ 4|f|^2 \frac{r_{waist}^2}{|c|^2} B \frac{1}{(z+1)^2 (z-1)}
\\
&\left.+ 4 \left( \frac{1}{r_{waist}^2} + \frac{\alpha^2}{4|c|^2} \right)\frac{r_{waist}^2}{c}
H \frac{1}{z(z+1)^2 (z-1)}
\right] dz^2 .
\end{align*}
Putting the terms over a common denominator $z(z+1)^2 (z-1)^2$, we obtain
\begin{align*}
\sum_{k=1}^{5} \phi_k(c,f,\alpha)^2
=
&\left[L^2 z+ \frac{4H}{c}(z+1)- \frac{\alpha^2}{|c|^2}\frac{r_{waist}^2}{c} H (z-1)\right.\\
&- 4B \left( \frac{1}{r_{waist}^2} + \frac{\alpha^2}{4|c|^2}\right) z(z+1)
\\
&+ B \frac{\alpha^2}{|c|^2}r_{waist}^2\left( \frac{1}{r_{waist}^2} + \frac{\alpha^2}{4|c|^2} \right) z(z-1)\\
&+ 4|f|^2 \frac{r_{waist}^2}{|c|^2} B z(z-1)
\\
&\left.+ \frac{\alpha^2}{|c|^2} \frac{r_{waist}^2}{c} H (z-1)
+ \frac{4H}{c} (z-1)
\right] \frac{1}{z(z+1)^2 (z-1)^2}dz^2 .
\end{align*}

The coefficient of $z^2$ in the bracket is
\begin{align*}
&-4B \left( \frac{1}{r_{waist}^2} + \frac{\alpha^2}{4|c|^2} \right)
+ B \frac{\alpha^2}{|c|^2}r_{waist}^2
\left( \frac{1}{r_{waist}^2} + \frac{\alpha^2}{4|c|^2} \right)
+ 4|f|^2 \frac{r_{waist}^2}{|c|^2} B
\\
&= \frac{B}{r_{waist}^2|c|^2} \left[
- 4|c|^2
+ \frac{\alpha^4}{4|c|^2}r_{waist}^4
+ 4|f|^2r_{waist}^4
\right]\\
&=0,
\end{align*}
where we used
\begin{align*}
r_{waist}^4
=
\frac{16|c|^4}{16|c|^2 |f|^2 + \alpha^4}.
\end{align*}

Similarly, the coefficient of $z$ in the bracket is
\begin{align*}
&L^2+\frac{8H}{c}
- 4B \left[
\frac{1}{r_{waist}^2}+\frac{\alpha^2}{4|c|^2}
+\frac{\alpha^2}{4|c|^2}r_{waist}^2\left(\frac{1}{r_{waist}^2}+\frac{\alpha^2}{4|c|^2}\right)
+ |f|^2\frac{r_{waist}^2}{|c|^2}
\right]\\
&= L^2+\frac{8H}{c}\\
&\qquad- 4B\left(\frac{1}{r_{waist}^2}+\frac{\alpha^2}{4|c|^2}\right)\left[\left(1+\frac{\alpha^2}{4|c|^2}r_{waist}^2\right)+ r_{waist}^2\left(\frac{1}{r_{waist}^2}-\frac{\alpha^2}{4|c|^2}\right)
\right]\\
&= L^2+\frac{8H}{c}
- 8B \left(
\frac{1}{r_{waist}^2}
+ \frac{\alpha^2}{4|c|^2}
\right)\\
&=0.
\end{align*}
In the first equality, we used
\begin{align*}
\frac{|f|^2}{|c|^2}=\frac{1}{r_{waist}^4}-\frac{\alpha^4}{16|c|^4}=\left(\frac{1}{r_{waist}^2}-\frac{\alpha^2}{4|c|^2}\right)\left(\frac{1}{r_{waist}^2}+\frac{\alpha^2}{4|c|^2}\right),
\end{align*} 
which follows from the definition of $r_{waist}$. The last equality follows from the definition of $L$. 

The constant term in the bracket also vanishes. This completes the proof of the lemma.
\end{proof}

\subsection{Symmetries}
Let $\alpha>0$ and $c,f\in\mathbb{C}\setminus\{0\}$. The plane through the origin and parallel to $\mathsf{P}(c,f,\alpha)$ is spanned by the two vectors
\begin{align*}
\left(\frac{1}{r_{waist}^2}+\frac{\alpha^2}{4|c|^2}\right)\left(-\textup{Re}\ c,\textup{Im}\ c,0,0,0\right)+\left(0,0,0,\textup{Re}\ f,-\textup{Im}\ f\right)
\end{align*}
and
\begin{align*}
\left(\frac{1}{r_{waist}^2}+\frac{\alpha^2}{4|c|^2}\right)\left(\textup{Im}\ c,\textup{Re}\ c,0,0,0\right)+\left(0,0,0,\textup{Im}\ f,\textup{Re}\ f\right).
\end{align*}
We denote this plane by $\mathsf{P}_0(c,f,\alpha)$. Recall that $\mathsf{P}(c,f,\alpha)$ denotes the plane containing the waist circle of the Hoffman--Osserman catenoid $\mathsf{HO}(c,f,\alpha)$; see Section \ref{dubidubab}.

Let $U_{c,f,\alpha}\in O(5)$ denote the reflection across $\mathsf{P}_0(c,f,\alpha)$. The orthogonal complement $\mathsf{P}_0(c,f,\alpha)^{\perp}$ is spanned by the orthogonal vectors
\begin{align*}
&\left(1,0,0,\left(\frac{1}{r_{waist}^2}+\frac{\alpha^2}{4|c|^2}\right)\frac{\textup{Re}(cf)}{|f|^2},-\left(\frac{1}{r_{waist}^2}+\frac{\alpha^2}{4|c|^2}\right)\frac{\textup{Im}(cf)}{|f|^2}\right),\\
&\left(0,1,0,-\left(\frac{1}{r_{waist}^2}+\frac{\alpha^2}{4|c|^2}\right)\frac{\textup{Im}(cf)}{|f|^2},-\left(\frac{1}{r_{waist}^2}+\frac{\alpha^2}{4|c|^2}\right)\frac{\textup{Re}(cf)}{|f|^2}\right),
\end{align*}
and 
\begin{align*}
(0,0,1,0,0).
\end{align*}
Consequently, $U_{c,f,\alpha}$ is given by
\begin{align*}
U_{c,f,\alpha}=\begin{pmatrix}
\displaystyle \frac{\alpha^2}{4|c|^2} r_{waist}^2\, I
& &
\displaystyle \frac{r_{waist}^2}{|c|^2}
\begin{pmatrix}
-\mathrm{Re}(cf) & \mathrm{Im}(cf) \\
\mathrm{Im}(cf) & \mathrm{Re}(cf)
\end{pmatrix}
\\
&-1&\\
\displaystyle \frac{r_{waist}^2}{|c|^2}
\begin{pmatrix}
-\mathrm{Re}(cf) & \mathrm{Im}(cf) \\
\mathrm{Im}(cf) & \mathrm{Re}(cf)
\end{pmatrix}
& &
- \displaystyle \frac{\alpha^2 }{4|c|^2}r_{waist}^2 I
\end{pmatrix}.
\end{align*}
Here, $I$ denotes the $2\times 2$ identity matrix, and all blank entries are zero. From this expression, one can readily verify that $U_{c,f,\alpha}$ exchanges the limiting unoriented tangent planes of the two ends of $\mathsf{HO}(c,f,\alpha)$.

Define
\begin{align*}
\Lambda_{c,f,\alpha} :=
\begin{pmatrix}
\mathcal{S}_{\frac{\pi}{g+1}} & 
\\ & 1 &
\\ & &\mathcal{S}_{-\frac{\pi}{g+1}}
\end{pmatrix}\cdot U_{c,f,\alpha}
\end{align*}
and
\begin{align*}
K_{c,f} := 
\begin{pmatrix}
\mathcal{T}_{2\theta_c}  & 
\\ & 1 &
\\ & &\mathcal{T}_{2\theta_f} 
\end{pmatrix}.
\end{align*}
We remark that the block matrix in the definition of $\Lambda_{c,f,\alpha}$ and the matrix $K_{c,f}$ are of the forms given in (\ref{aa}).

The Riemann surface $\Sigma_g$ is invariant under the following conformal mappings of $\left(\mathbb{C}\cup\{\infty\}\right)^2$:
\begin{align*}
\lambda(z,w):=\left(-z,\mu^gw\right),\quad \kappa(z,w):=(\overline{z},\overline{w}),
\end{align*}
where $\mu:=e^{\frac{i\pi}{g+1}}$. Hence, $\lambda$ and $\kappa$ generate a group of conformal diffeomorphisms of $\Sigma_g$ isomorphic to the dihedral group of order $4(g+1)$. We now compute how $\lambda$ and $\kappa$ act on the Weierstrass data
\begin{align*}
\Phi_{c,f,\alpha}:=\left(\phi_1(c,f,\alpha),\phi_2(c,f,\alpha),\phi_3(c,f,\alpha),\phi_4(c,f,\alpha),\phi_5(c,f,\alpha)\right)
\end{align*}
defined in Section \ref{GUGUGU}.

\begin{lemma}\label{lambda}
For every $\alpha>0$ and $c,f\in\mathbb{C}\setminus\{0\}$, we have
\begin{align*}
\lambda^*\Phi_{c,f,\alpha}=\Lambda_{c,f,\alpha}\Phi_{c,f,\alpha}.
\end{align*}
\end{lemma}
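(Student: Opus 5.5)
The plan is to compute the pullback of each basic differential under $\lambda(z,w)=(-z,\mu^g w)$ and then check the identity one matrix block at a time. The basic differentials are $\omega:=(z/w)^{g-1}dz/w$, $\eta_1,\eta_2,\eta_3$ and $\eta_{13}$. Since $\mu^{g+1}=-1$, we have $\mu^{g}=-\mu^{-1}$, so $\lambda^*w=-\mu^{-1}w$ and $\lambda^*dz=-dz$. Using these, I expect
\begin{align*}
\lambda^*\omega=-\mu^{g-1}\omega\cdot\mu^{\,g}\cdots,\qquad
\lambda^*\eta_1=-\mu\,\eta_3,\qquad
\lambda^*\eta_3=-\mu\,\eta_1,\qquad
\lambda^*\eta_2=\mu^{-1}\eta_2,\qquad
\lambda^*\eta_{13}=-\eta_{13}.
\end{align*}
For $\omega$, write $\omega=z^{g-1}w^{-g}dz$. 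Then $\lambda^*\omega=(-1)^{g-1}z^{g-1}(-\mu^{-1})^{-g}w^{-g}(-dz)=\mu^{g}\omega\cdot(\pm)$. After simplifying with $\mu^{g}=-\mu^{-1}$, this becomes $\mu^{-1}\omega$ times a sign, and I will fix the sign carefully. The derivation of the $\eta_1,\eta_3$ rules is the same kind of computation: $\lambda^*\eta_1=\frac{-dz}{(-\mu^{-1}w)(-z-1)}=-\mu\,\frac{dz}{w(z+1)}$.

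With these rules, both sides of the claimed identity become $\mathbb{C}^5$-valued combinations of the four independent forms $\omega,\eta_1,\eta_2,\eta_3$, plus $\eta_{13}$, and I will compare coefficients form by form.

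\begin{itemize}
\item For the third component, $\Lambda$ acts by $-1$, since $U$ has the entry $-1$ and the block matrix has $1$. This matches $\lambda^*(L\eta_{13})=-L\eta_{13}$.
\item In components $(1,2,4,5)$, let $v_\omega$, $v_1$, $v_2$, $v_3\in\mathbb{C}^4$ be the coefficient vectors of $\Phi$. The identity then splits into three conditions:
\begin{align*}
\Lambda' v_1=-\mu\, v_3,\qquad \Lambda' v_3=-\mu\, v_1,\qquad \Lambda' v_2=\mu^{-1}v_2,
\end{align*}
together with the corresponding condition for $v_\omega$. Here $\Lambda'$ denotes the restriction of $\Lambda$ to coordinates $1,2,4,5$.
\end{itemize}

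Each of these four conditions is a statement about vectors of the special forms $(a,\pm ia,b,\pm ib)$. For the block part, $\mathcal{S}_\theta$ acting on a vector $(x,ix)$ multiplies it by $e^{i\theta}$, and on $(x,-ix)$ by $e^{-i\theta}$. The block $\operatorname{diag}(\mathcal{S}_{\pi/(g+1)},\mathcal{S}_{-\pi/(g+1)})$ therefore acts on each coefficient vector as multiplication by $\mu^{\pm1}$, with the sign determined by the pattern $(a,\pm ia)$ or $(b,\pm ib)$.

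The main work is applying $U$ to each of these vectors. Write $\beta=\alpha^2r_{waist}^2/(4|c|^2)$ and $\gamma=r_{waist}^2/|c|^2$. The off-diagonal block $\begin{pmatrix}-\mathrm{Re}(cf)&\mathrm{Im}(cf)\\ \mathrm{Im}(cf)&\mathrm{Re}(cf)\end{pmatrix}$ sends a vector $(x,ix)$ to $(-cf\,x,\,-i\,cf\,x)$, and sends $(x,-ix)$ to $(-\overline{cf}x,\ i\overline{cf}x)$; I will verify these with a short computation.

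\begin{itemize}
\item Take $v_1=\frac1c(1,-i,0,0)$. Its image under $U$ is $\frac{\beta}{c}(1,-i)$ in the first block and $-\frac{\gamma\overline{cf}}{c}(1,i)$ in the second block. Since $\overline{cf}/c=\overline{c}\,\overline{f}/c$, this should match $v_3$ up to the phase $\mu$ coming from the rotation block. Note that $v_3$ has first block $-\frac{\beta}{c}(1,-i)$ and second block $\frac{f\gamma}{1}(1,i)$.
\item I will carry out the same check for $v_2$ and $v_\omega$. For these, the identities $\beta^2+\gamma^2|cf|^2/|c|^2\cdots=1$, i.e. $U^2=I$, are needed. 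The relation $|f|^2/|c|^2=(1/r^2-\alpha^2/(4|c|^2))(1/r^2+\alpha^2/(4|c|^2))$ from Lemma~\ref{squared sum} is needed as well, in order to turn the $U$-images of $v_2$ and $v_\omega$ back into multiples of themselves.
\end{itemize}

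I expect the main obstacle to be the bookkeeping in the $v_1\leftrightarrow v_3$ exchange. There, the conjugations in $\overline{c}$ and $\overline f$ and the phases $\mu^{\pm1}$ must align exactly. The $v_2$ and $v_\omega$ checks should reduce to the algebraic identity for $r_{waist}$ already used in Lemma~\ref{squared sum}.
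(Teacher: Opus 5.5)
Your strategy is the paper's: expand $\Phi_{c,f,\alpha}$ along $\omega,\eta_1,\eta_2,\eta_3,\eta_{13}$, with coefficient vectors equal to the paper's $Q_0,\dots,Q_4$ up to scalars. Then apply the pullback rules and check that $\Lambda_{c,f,\alpha}$ rescales $v_\omega$, $v_2$, $Q_4$ and exchanges $v_1$, $v_3$ with the matching factors. Your pullback rules are correct, and the sign you left open is $\lambda^*\omega=\mu^{g}\omega=-\mu^{-1}\omega$.

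However, two of the linear-algebra facts you state are wrong. If you carry out the $v_1\leftrightarrow v_3$ check with them, it does not close. You get $-\gamma\,\overline{cf}/c$ in the second block where $-\gamma f$ is needed, and these differ unless $cf\in\mathbb{R}$.

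The first correction concerns the rotation block. In fact $\mathcal{S}_\theta(x,\pm ix)^T=e^{\mp i\theta}(x,\pm ix)^T$, not $e^{\pm i\theta}(x,\pm ix)^T$. Hence the block $\mathrm{diag}(\mathcal{S}_{\pi/(g+1)},1,\mathcal{S}_{-\pi/(g+1)})$ multiplies the pattern of $v_1,v_3$ by $\mu$. That pattern is $(1,-i)$ in coordinates $1,2$ and $(1,i)$ in coordinates $4,5$. The same block multiplies the opposite pattern, that of $v_\omega,v_2$, by $\mu^{-1}$. With your convention every rotation phase would come out inverted, and $\mu^2\neq 1$ for $g\ge 1$.

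The second correction concerns the off-diagonal block $M$ of $U_{c,f,\alpha}$. It is $-|cf|$ times a reflection, so it exchanges the two patterns rather than preserving them:
$$M(1,i)^T=-\overline{cf}\,(1,-i)^T,\qquad M(1,-i)^T=-cf\,(1,i)^T.$$
You have these swapped.

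With the corrections, $Uv_1=\bigl(\tfrac{\beta}{c}(1,-i),\,-\gamma f(1,i)\bigr)=-v_3$ exactly. Then $Uv_3=-v_1$ follows from $U^2=I$, i.e.\ $\beta^2+\gamma^2|cf|^2=1$.

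Write $\sigma=r_{waist}^{-2}+\alpha^2/(4|c|^2)$. Two identities give $Uv_2=v_2$ and $Uv_\omega=-v_\omega$:
$$\gamma|c|^2\sigma=1+\beta,\qquad \gamma|f|^2=(1-\beta)\sigma.$$
The second is the factorization of $|f|^2/|c|^2$ used in Lemma~\ref{squared sum}.

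Altogether you get
$$\Lambda v_1=-\mu v_3,\quad \Lambda v_3=-\mu v_1,\quad \Lambda v_2=\mu^{-1}v_2,\quad \Lambda v_\omega=-\mu^{-1}v_\omega,\quad \Lambda Q_4=-Q_4.$$
These are precisely the paper's relations for $Q_0,\dots,Q_4$, and they complete the proof.

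Factoring $\Lambda$ as (rotation block)$\cdot U$ and computing $U$ on each coefficient vector is a perfectly good substitute for the paper's explicit product formula for $\Lambda_{c,f,\alpha}$. It also has a geometric reading: $v_2$ lies in the complexification of $\mathsf{P}_0(c,f,\alpha)$, while $v_\omega$ and $Q_4$ lie in that of its orthogonal complement. The bookkeeping must be redone with the signs above.
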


\begin{proof}
For convenience, define
\begin{align*}
Q_0&:=\left(1,i,0,\frac{\overline{c}}{f}\left(\frac{1}{r_{waist}^2}+\frac{\alpha^2}{4|c|^2}\right),-i\frac{\overline{c}}{f}\left(\frac{1}{r_{waist}^2}+\frac{\alpha^2}{4|c|^2}\right)\right),\\
Q_1&:=(1,-i,0,0,0),\\
Q_2&:=\left(-c\left(\frac{1}{r_{waist}^2}+\frac{\alpha^2}{4|c|^2}\right),-ic\left(\frac{1}{r_{waist}^2}+\frac{\alpha^2}{4|c|^2}\right),0,\overline{f},-i\overline{f}\right),\\
Q_3&:=\left(-\frac{\alpha^2}{4|c|^2},i\frac{\alpha^2}{4|c|^2},0,\frac{f}{\overline{c}},i\frac{f}{\overline{c}}\right),\\
Q_4&:=(0,0,1,0,0).
\end{align*}
Then $\Phi_{c,f,\alpha}$ can be expressed as
\begin{align*}
\Phi_{c,f,\alpha}=H\left(\frac{z}{w}\right)^{g-1}\frac{dz}{w}Q_0+\frac{1}{c}\eta_1Q_1+B\eta_2Q_2+\frac{r_{waist}^2}{c}\eta_3Q_3+L\eta_{13}Q_4.
\end{align*}

A direct computation yields
\begin{align*}
\Lambda_{c,f,\alpha}=\begin{pmatrix}
\dfrac{\alpha^2}{4 |c|^2}r_{waist}^2 \mathcal{S}_{\frac{\pi}{g+1}} &
 &
-\dfrac{|f|}{|c|} r_{waist}^2 \, \mathcal{T}_{-\frac{\pi}{g+1}+\theta_c+\theta_f}
\\
 &
-1 &
&
\\
-\dfrac{|f|}{|c|} r_{waist}^2 \, \mathcal{T}_{\frac{\pi}{g+1}+\theta_c+\theta_f} &
 &
-\dfrac{\alpha^2}{4|c|^2} r_{waist}^2 \mathcal{S}_{-\frac{\pi}{g+1}}
\end{pmatrix}.
\end{align*}
It follows that
\begin{align*}
\Lambda_{c,f,\alpha}Q_0=-\frac{1}{\mu}Q_0,\quad \Lambda_{c,f,\alpha}Q_2=\frac{1}{\mu}Q_2,\quad \Lambda_{c,f,\alpha}Q_4=-Q_4,
\end{align*}
and
\begin{align*}
\Lambda_{c,f,\alpha}Q_1=-\mu r_{waist}^2Q_3,\quad \Lambda_{c,f,\alpha}Q_3=-\frac{\mu}{r_{waist}^2}Q_1.
\end{align*}
Consequently,
\begin{align*}
&\Lambda_{c,f,\alpha}\Phi_{c,f,\alpha}\\
&=-\frac{1}{\mu}H\left(\frac{z}{w}\right)^{g-1}\frac{dz}{w}Q_0-\mu\frac{r_{waist}^2}{c}\eta_1Q_3+\frac{1}{\mu}B\eta_2Q_2-\mu\frac{1}{c}\eta_3Q_1-L\eta_{13}Q_4.
\end{align*}

On the other hand, we have
\begin{align*}
\lambda^*\left(\left(\frac{z}{w}\right)^{g-1}\frac{dz}{w}\right)=-\frac{1}{\mu}\left(\frac{z}{w}\right)^{g-1}\frac{dz}{w},\quad \lambda^*\eta_2=\frac{1}{\mu}\eta_2,\quad \lambda^*\eta_{13}=-\eta_{13},
\end{align*}
and
\begin{align*}
\lambda^*\eta_1=-\mu\eta_3,\quad \lambda^*\eta_3=-\mu\eta_1.
\end{align*}
Hence,
\begin{align*}
&\lambda^*\Phi_{c,f,\alpha}\\
&=-\frac{1}{\mu}H\left(\frac{z}{w}\right)^{g-1}\frac{dz}{w}Q_0-\mu\frac{1}{c}\eta_3Q_1+\frac{1}{\mu}B\eta_2Q_2-\mu\frac{r_{waist}^2}{c}\eta_1Q_3-L\eta_{13}Q_4.
\end{align*}
Comparing the two expressions, we conclude that
\begin{align*}
\lambda^*\Phi_{c,f,\alpha}=\Lambda_{c,f,\alpha}\Phi_{c,f,\alpha}, 
\end{align*}
which completes the proof.
\end{proof}

\begin{lemma}\label{kappa}
For every $\alpha>0$ and $c,f\in\mathbb{C}\setminus\{0\}$, we have
\begin{align*}
\overline{\kappa^*\Phi_{c,f,\alpha}}=K_{c,f}\Phi_{c,f,\alpha}.
\end{align*}
\end{lemma}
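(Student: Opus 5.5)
The plan mirrors the proof of Lemma \ref{lambda}: decompose $\Phi_{c,f,\alpha}$ along the vectors $Q_0,\dots,Q_4$, compute separately how $\kappa$ acts on the basic differentials and how $K_{c,f}$ acts on the $Q_j$, and then compare. Write
\begin{align*}
\Phi_{c,f,\alpha}=H\left(\frac{z}{w}\right)^{g-1}\frac{dz}{w}Q_0+\frac{1}{c}\eta_1Q_1+B\eta_2Q_2+\frac{r_{waist}^2}{c}\eta_3Q_3+L\eta_{13}Q_4 .
\end{align*}
Since $\kappa(z,w)=(\overline z,\overline w)$ and all the basic differentials have real rational coefficients in $z,w$, we get
\begin{align*}
\overline{\kappa^*\omega}=\omega
\end{align*}
for each of $\left(\frac{z}{w}\right)^{g-1}\frac{dz}{w}$, $\eta_1$, $\eta_2$, $\eta_3$, $\eta_{13}$. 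Hence
\begin{align*}
\overline{\kappa^*\Phi_{c,f,\alpha}}=\overline{H}\left(\frac{z}{w}\right)^{g-1}\frac{dz}{w}\,\overline{Q_0}+\frac{1}{\overline c}\eta_1\overline{Q_1}+B\eta_2\overline{Q_2}+\frac{r_{waist}^2}{\overline c}\eta_3\overline{Q_3}+L\eta_{13}\overline{Q_4},
\end{align*}
using that $B$ and $L$ are real.

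It therefore suffices to verify five vector identities:
\begin{align*}
K_{c,f}Q_0=\frac{\overline H}{H}\overline{Q_0},\quad K_{c,f}Q_1=\frac{c}{\overline c}\overline{Q_1},\quad K_{c,f}Q_2=\overline{Q_2},\quad K_{c,f}Q_3=\frac{c}{\overline c}\overline{Q_3},\quad K_{c,f}Q_4=\overline{Q_4}.
\end{align*}
The last identity is immediate, since $K_{c,f}$ acts as $1$ on the third coordinate. For the others, note that $\mathcal{T}_{2\theta}$ sends the complex vector $a(1,i)$ to $\overline{a}e^{2i\theta}\,\overline{(1,i)}$ up to the appropriate conjugation. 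Concretely, for real-linear action on pairs $(x,y)$ identified with $x-iy$-type expressions, $\mathcal{T}_{2\theta}(a,ia)=e^{-2i\theta}\cdot\overline{(\ldots)}$. One checks each block by writing $(a,\pm ia)$ and computing $\mathcal{T}_{2\theta}(a,\pm ia)^T=(e^{\mp 2i\theta}a,\ \mp i e^{\mp2i\theta}a)$. Then:
\begin{itemize}
\item[\textbullet] For $Q_1$, the first block gives $e^{2i\theta_c}(1,i)=\frac{c}{\overline c}\overline{Q_1}$.
\item[\textbullet] For $Q_2$, the first block sends $-c\beta(1,i)$ (with $\beta$ real) to $-\overline{c}\beta(1,-i)$. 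The fifth block sends $\overline f(1,-i)$ to $f(1,i)$, which equals $\overline{\overline f(1,-i)}$. Together these give $\overline{Q_2}$.
\item[\textbullet] $Q_3$ and $Q_0$ are handled in the same way: the first block contributes the phase $e^{\pm2i\theta_c}$, and the fourth–fifth block contributes $e^{\pm 2i\theta_f}$ acting on $f/\overline c$ or $\overline c/f$.
\end{itemize}

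The only step needing care is $Q_0$, where the required scalar is $\overline H/H$. From the definition of $H$, $H$ is a real multiple of $1/\overline c$, so $\overline H/H=\overline c/c=e^{-2i\theta_c}$. The first block sends $(1,i)$ to $e^{-2i\theta_c}(1,-i)$, which matches. For the last block, $\frac{\overline c}{f}\beta(1,-i)\mapsto \frac{c}{\overline f}\beta e^{-2i\theta_c}\cdot e^{2i\theta_c}\ldots$; one must confirm this equals $e^{-2i\theta_c}\frac{c}{\overline f}\beta(1,i)$. This bookkeeping of the phases $\theta_c,\theta_f$ (in particular, the sign conventions of $\mathcal{T}_\theta$ acting on $(1,\pm i)$) is the main obstacle. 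I would settle it first by the explicit formula
\begin{align*}
\mathcal{T}_{\theta}\begin{pmatrix}1\\ \pm i\end{pmatrix}=e^{\mp i\theta}\begin{pmatrix}1\\ \mp i\end{pmatrix},
\end{align*}
after which every identity reduces to a one-line check. Summing the terms then gives $\overline{\kappa^*\Phi_{c,f,\alpha}}=K_{c,f}\Phi_{c,f,\alpha}$.
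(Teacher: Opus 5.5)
Your proposal is correct and follows the paper's proof essentially verbatim: it uses the same decomposition along $Q_0,\dots,Q_4$, the same relations $K_{c,f}Q_0=\frac{\overline c}{c}\overline{Q_0}$, $K_{c,f}Q_1=\frac{c}{\overline c}\overline{Q_1}$, $K_{c,f}Q_2=\overline{Q_2}$, $K_{c,f}Q_3=\frac{c}{\overline c}\overline{Q_3}$, $K_{c,f}Q_4=\overline{Q_4}$, and the same reality facts $\overline H=\frac{\overline c}{c}H$, $\overline B=B$, $\overline L=L$. Your formula $\mathcal{T}_\theta(1,\pm i)^T=e^{\mp i\theta}(1,\mp i)^T$ is right and makes the paper's ``direct computation'' explicit; the earlier heuristic $\overline{a}e^{2i\theta}$ and the garbled intermediate line for the last block of $Q_0$ should just be replaced by it, since $\mathcal{T}_\theta$ acts complex-linearly and so $a(1,i)\mapsto a e^{-2i\theta}(1,-i)$.
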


\begin{proof}
We use the same notation as in the proof of Lemma \ref{lambda}. A direct computation gives
\begin{align*}
K_{c,f}Q_0=\frac{\overline{c}}{c}\overline{Q_0},\quad K_{c,f}Q_1=\frac{c}{\overline{c}}\overline{Q_1},\quad K_{c,f}Q_2=\overline{Q_2},\quad K_{c,f}Q_3=\frac{c}{\overline{c}}\overline{Q_3},
\end{align*}
and 
\begin{align*}
K_{c,f}Q_4=Q_4=\overline{Q_4}.
\end{align*}
Recalling that
\begin{align*}
\Phi_{c,f,\alpha}=H\left(\frac{z}{w}\right)^{g-1}\frac{dz}{w}Q_0+\frac{1}{c}\eta_1Q_1+B\eta_2Q_2+\frac{r_{waist}^2}{c}\eta_3Q_3+L\eta_{13}Q_4,
\end{align*}
we obtain
\begin{align*}
K_{c,f}\Phi_{c,f,\alpha}=\frac{\overline{c}}{c}H\left(\frac{z}{w}\right)^{g-1}\frac{dz}{w}\overline{Q_0}+\frac{1}{\overline{c}}\eta_1\overline{Q_1}+B\eta_2\overline{Q_2}+\frac{r_{waist}^2}{\overline{c}}\eta_3\overline{Q_3}+L\eta_{13}\overline{Q_4}.
\end{align*}

On the other hand, we have
\begin{align*}
\overline{\kappa^*\Phi_{c,f,\alpha}}=\overline{H}\left(\frac{z}{w}\right)^{g-1}\frac{dz}{w}\overline{Q_0}+\frac{1}{\overline{c}}\eta_1\overline{Q_1}+\overline{B}\eta_2\overline{Q_2}+\frac{r_{waist}^2}{\overline{c}}\eta_3\overline{Q_3}+\overline{L}\eta_{13}\overline{Q_4}.
\end{align*}
By the definitions of $H$, $B$, and $L$, 
\begin{align*}
\overline{H}=\frac{\overline{c}}{c}H,\quad \overline{B}=B,\quad \overline{L}=L.
\end{align*}
Therefore,
\begin{align*}
\overline{\kappa^*\Phi_{c,f,\alpha}}=\frac{\overline{c}}{c}H\left(\frac{z}{w}\right)^{g-1}\frac{dz}{w}\overline{Q_0}+\frac{1}{\overline{c}}\eta_1\overline{Q_1}+B\eta_2\overline{Q_2}+\frac{r_{waist}^2}{\overline{c}}\eta_3\overline{Q_3}+L\eta_{13}\overline{Q_4}.
\end{align*}
Comparing the two expressions, we conclude that
\begin{align*}
\overline{\kappa^*\Phi_{c,f,\alpha}}=K_{c,f}\Phi_{c,f,\alpha}.
\end{align*}
This completes the proof.
\end{proof}

\subsection{Period computations}\label{period}
In this subsection, we show that the meromorphic $1$-forms defined in Section \ref{GUGUGU} have vanishing real periods on $\Sigma_g\setminus\{q_1,q_2,q_3\}$. 

We begin with $\phi_3(c,f,\alpha)$. Since
\begin{align*}
\phi_3(c,f,\alpha)=\frac{L}{(z+1)(z-1)}dz=\frac{L}{2}d\log\left(\frac{z-1}{z+1}\right),
\end{align*}
and $L$ is real, we have
\begin{align*}
\textup{Re}\int_{\delta}\phi_3(c,f,\alpha)=0
\end{align*}
for every closed curve $\delta$ in $\Sigma_g\setminus\{q_1,q_2,q_3\}$. 

Moreover, the holomorphic differential
\begin{align*}
\left(\frac{z}{w}\right)^{g-1}\frac{dz}{w}
\end{align*}
has vanishing residues. As discussed in Section \ref{emenmulone}, the residues of 
\begin{align*}
\eta_1,\quad \eta_2,\quad \eta_3
\end{align*}
also vanish at the three punctures $q_1$, $q_2$, and $q_3$. Consequently, the linear combinations
\begin{align*}
\phi_1(c,f,\alpha),\quad \phi_2(c,f,\alpha),\quad \phi_4(c,f,\alpha),\quad \phi_5(c,f,\alpha)
\end{align*}
have vanishing residues at all three punctures. Thus, it remains to show that $\phi_1(c,f,\alpha)$, $\phi_2(c,f,\alpha)$, $\phi_4(c,f,\alpha)$, and $\phi_5(c,f,\alpha)$ have vanishing real periods along every closed curve in $\Sigma_g$. 

Since our Riemann surface $\Sigma_g$ is the same as that considered in \cite{HM1, JLEY2}, we may use the homology basis and the branch of $\left( z^g (z+1)(z-1) \right)^{\frac{1}{g+1}}$ introduced therein to evaluate the period integrals. Define 
\begin{align*}
\gamma(t)=\frac{1}{2}+e^{it},\quad 0\leq t\leq 2\pi,
\end{align*}
and let $\tilde{\gamma}(t)=(\gamma(t),w(t))$ denote the lift to $\Sigma_g$ with $w(0)\in\mathbb{R}$. Then the images of $\tilde{\gamma}$ under the group generated by the conformal diffeomorphisms $\lambda$ and $\kappa$ generate $H_1(\Sigma_g;\mathbb{Z})$. By Lemma \ref{lambda} and Lemma \ref{kappa}, it suffices to verify the vanishing of the real periods along $\tilde{\gamma}$. 

We first recall the following integral identities.
\begin{lemma}[Lemma $6.13$ in \cite{JLEY2}]\label{BOONGBOONG}
We have
\begin{align*}
&\int_{\tilde{\gamma}}\left(\frac{z}{w}\right)^{g-1}\frac{dz}{w}=2i\sin\left(\frac{\pi}{g+1}\right)E_0,\quad \int_{\tilde{\gamma}}\eta_1=-2i\sin\left(\frac{\pi}{g+1}\right)E_1,\\
&\int_{\tilde{\gamma}}\eta_2=2i\sin\left(\frac{\pi}{g+1}\right)E_2,\quad \int_{\tilde{\gamma}}\eta_3=2i\sin\left(\frac{\pi}{g+1}\right)E_3,
\end{align*}
where $E_0,E_1,E_2,E_3$ are the constants defined in Section \ref{GUGUGU}.
\end{lemma}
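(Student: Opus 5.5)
The plan is to compute each of the four integrals along $\tilde{\gamma}$ directly, by collapsing the loop onto the real segment $[0,1]$ of the $z$-line. The curve $\gamma(t)=\frac12+e^{it}$ encircles the branch points $z=0$ and $z=1$ and separates them from $z=-1$. Each integrand has the form $z^{a}(z+1)^{b}(z-1)^{e}w^{-m}\,dz$, with $m\in\{1,g,-1\}$ and integer exponents.

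For such a form, $\int_{\tilde\gamma}$ is computed by the standard contour-deformation argument. We shrink $\gamma$ onto a ``dumbbell'' that runs along $[0,1]$ on the upper side, goes around $z=1$, returns along the lower side, and goes around $z=0$.

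On the segment we fix the branch of $w=(z^g(z+1)(z-1))^{1/(g+1)}$ inherited from the lift with $w(0)\in\mathbb{R}$. Concretely, $w=e^{i\pi/(g+1)}\bigl(t^g(1-t^2)\bigr)^{1/(g+1)}$, or the real negative choice as in \cite{HM1}; the precise determination is the one from \cite{JLEY2}. Crossing the cut across $[0,1]$ multiplies $w$ by a root of unity $\mu^{\pm 2}$, coming from the local monodromy at $z=1$ (exponent $1/(g+1)$) or at $z=0$ (exponent $g/(g+1)$).

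The two sides of the segment therefore contribute $\bigl(\zeta_m-\zeta_m^{-1}\bigr)\int_0^1(\cdots)\,dt$, up to a unit factor, where $\zeta_m$ is a power of $\mu$. With the right branch this is $\pm 2i\sin\bigl(\frac{\pi}{g+1}\bigr)$ times the real integral.

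The small circles around $0$ and $1$ contribute nothing in the limit whenever the integrand is integrable there. We check this for each form.
\begin{itemize}
\item For $(z/w)^{g-1}dz/w = z^{g-1}w^{-g}dz$, near $0$ we have $|w^{-g}|\sim|z|^{-g^2/(g+1)}$, so the integrand is $\sim|z|^{g-1-g^2/(g+1)}=|z|^{-1/(g+1)}$, which is integrable. Near $1$ it is $|z-1|^{-g/(g+1)}$, also integrable. Writing $z^{g-1}w^{-g}=w/(z(z^2-1))$ matches the integrand of $E_0$ up to sign.
\item For $\eta_1=dz/(w(z-1))$, the behaviour near $z=1$ is $|z-1|^{-1-1/(g+1)}$, which is not integrable. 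Here the approach is to first rewrite $\eta_1$ by subtracting an exact differential. A natural choice is $d\bigl(z^{\,\cdot}/w\bigr)$; for instance $d\bigl(z/w\bigr)$ is a combination of $\eta_1,\eta_3$ and $z\,dz/(w(z^2-1))$. This rewriting removes the nonintegrable singularity and produces the kernel $t/\bigl((t^g(1-t^2))^{1/(g+1)}(t+1)\bigr)$ of $E_1$. The appearance of $t$ in the numerator and $(t+1)$ in the denominator is consistent with integrating by parts against $d\bigl(z/w\bigr)$ and using the relation $(g+1)\,dw/w=\bigl(g/z+1/(z+1)+1/(z-1)\bigr)dz$.
\item The kernels of $\eta_2$ and $\eta_3$ match $E_2$ and $E_3$ directly, since they are integrable at $0$ and $1$. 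The exponents $1/(g+1)$ and $-1/(g+1)$ are at worst $-1+\epsilon$ near $0$, and $\eta_3$ has only $(z+1)$ in the denominator.
\end{itemize}

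The main obstacle is bookkeeping the branch and signs consistently: which side of the cut carries which phase, and the resulting signs $+2i\sin$ versus $-2i\sin$, notably the minus for $\eta_1$. Since the statement is quoted as Lemma~6.13 of \cite{JLEY2} on the same surface with the same homology basis and branch, I would ultimately invoke that computation. Independently, I would verify one case, say $\eta_3$, by the dumbbell argument to pin down the phase convention, and then transfer it to the others.
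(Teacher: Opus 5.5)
Your proposal is correct and in the end coincides with the paper's proof, which consists solely of citing \cite[Lemma~6.13]{JLEY2}; your collapse of $\tilde{\gamma}$ onto the two sides of $[0,1]$ is a sound outline of the computation behind that citation. The one slip is the exact-form identity, which should read $(g+1)\,d(z/w)=-\eta_1-\frac{z\,dz}{w(z+1)}$ (note that $\frac{z\,dz}{w(z^2-1)}=\frac{1}{2}(\eta_1+\eta_3)$ adds nothing new); combine it with the branch fixed by $w(0)>0$, namely $w=\mu\,(t^g(1-t^2))^{\frac{1}{g+1}}$ on the upper side of $(0,1)$ (traversed from $1$ to $0$) and $w=\mu^{-1}(t^g(1-t^2))^{\frac{1}{g+1}}$ on the lower side, and you get that $\left(\frac{z}{w}\right)^{g-1}\frac{dz}{w}$, $\frac{z\,dz}{w(z+1)}$, $\eta_2$, $\eta_3$ have periods $(\mu-\mu^{-1})E_k=2i\sin\left(\frac{\pi}{g+1}\right)E_k$ for $k=0,1,2,3$ respectively, so the minus sign for $\eta_1$ comes precisely from that identity.
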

\begin{proof}
See the proof of \cite[Lemma 6.13]{JLEY2}.
\end{proof}

\begin{lemma}\label{period condition}
We have
\begin{align*}
\textup{Re}\int_{\tilde{\gamma}}\phi_1(c,f,\alpha)=\textup{Re}\int_{\tilde{\gamma}}\phi_2(c,f,\alpha)=\textup{Re}\int_{\tilde{\gamma}}\phi_4(c,f,\alpha)=\textup{Re}\int_{\tilde{\gamma}}\phi_5(c,f,\alpha)=0
\end{align*}
for every $\alpha>0$ and $c,f\in\mathbb{C}\setminus\{0\}$.
\end{lemma}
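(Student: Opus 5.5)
The plan is to reduce everything to the four integrals of Lemma \ref{BOONGBOONG}. Set $s:=2\sin\left(\frac{\pi}{g+1}\right)$,
\begin{align*}
M:=\frac{1}{r_{waist}^2}+\frac{\alpha^2}{4|c|^2},\qquad a:=\frac{\alpha^2}{4|c|^2}r_{waist}^2 .
\end{align*}
Each $\phi_k(c,f,\alpha)$ for $k\in\{1,2,4,5\}$ is a constant linear combination of $\left(\frac{z}{w}\right)^{g-1}\frac{dz}{w}$, $\eta_1$, $\eta_2$, $\eta_3$. By Lemma \ref{BOONGBOONG}, $\int_{\tilde\gamma}\phi_k=is\,X_k$ for an explicit complex number $X_k$ built from $E_0,\dots,E_3$. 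Since $\textup{Re}(isX_k)=-s\,\textup{Im}X_k$, it suffices to show that each $X_k$ is real.

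First I collect three identities coming from the definitions of $B$, $H$ and $r_{waist}$:
\begin{align*}
cMBE_2=\frac{(E_1+E_3)(1+a)}{2\overline{c}},\qquad HE_0=\frac{(E_3-E_1)(1-a)}{2\overline{c}},\qquad \frac{|f|^2}{|c|^2}=\frac{M(1-a)}{r_{waist}^2}.
\end{align*}
The first uses $c/|c|^2=1/\overline{c}$ and $Mr_{waist}^2=1+a$. The third is the identity already used in the proof of Lemma \ref{squared sum}.

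For $\phi_1$, note that the $\eta_1$-coefficient $\frac1c$ is multiplied by $-E_1$ because of the sign in Lemma \ref{BOONGBOONG}. Hence
\begin{align*}
X_1=HE_0-\frac{E_1}{c}-cMBE_2-\frac{aE_3}{c}.
\end{align*}
Inserting the identities, the $1/\overline{c}$ terms combine as $\frac{(E_3-E_1)(1-a)-(E_1+E_3)(1+a)}{2\overline{c}}=-\frac{E_1+aE_3}{\overline{c}}$. This gives $X_1=-(E_1+aE_3)\left(\frac1c+\frac1{\overline c}\right)\in\mathbb{R}$.

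For $\phi_2$ the coefficients carry extra factors $\pm i$, so $X_2=iY$ with $Y=(E_1+aE_3)\left(\frac1c-\frac1{\overline c}\right)$. This $Y$ is purely imaginary, so $X_2$ is real.

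For $\phi_4$,
\begin{align*}
X_4=\frac{\overline{c}M}{f}HE_0+\overline{f}BE_2+\frac{f}{|c|^2}r_{waist}^2E_3 .
\end{align*}
Writing $\frac1f=\frac{\overline f}{|f|^2}$ and using the third identity, $\frac{M(1-a)}{2f}=\frac{\overline{f}r_{waist}^2}{2|c|^2}$. The first two terms then sum to $\frac{\overline{f}r_{waist}^2}{2|c|^2}\bigl((E_3-E_1)+(E_1+E_3)\bigr)$, so $X_4=\frac{r_{waist}^2E_3}{|c|^2}(f+\overline f)\in\mathbb R$. The same computation for $\phi_5$, with its factors $\mp i$, gives $X_5=\frac{r_{waist}^2E_3}{|c|^2}\,i(\overline f-f)\in\mathbb R$.

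The only delicate point is bookkeeping. One must track the signs from Lemma \ref{BOONGBOONG} (notably the minus sign for $\eta_1$) and the conjugations $c$ versus $\overline c$ and $f$ versus $\overline f$. The cancellation in $X_1$ and $X_4$ is exactly what the definitions of $B$ and $H$ were designed to produce, so I expect no real obstacle beyond careful algebra.
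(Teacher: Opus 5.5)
Your proposal is correct and follows the paper's proof: both evaluate the integrals via Lemma \ref{BOONGBOONG}, substitute the definitions of $B$ and $H$, and check that the resulting coefficients are real. Your $X_1,X_2,X_4$ reproduce the paper's formulas, and your use of $|f|^2/|c|^2=M(1-a)/r_{waist}^2$ fills in the step the paper only calls ``similar''. One harmless sign slip: for $\phi_5$ one gets $X_5=\frac{r_{waist}^2E_3}{|c|^2}\,i(f-\overline{f})=-\frac{2r_{waist}^2E_3}{|c|^2}\textup{Im}\,f$ rather than $i(\overline{f}-f)$, but either expression is real, so the conclusion stands.
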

\begin{proof}
Recall that
\begin{align*}
\phi_1(c,f,\alpha)=H \left(\frac{z}{w}\right)^{g-1} \frac{dz}{w}+\frac{1}{c}\eta_1-c\left(\frac{1}{r_{waist}^2} + \frac{\alpha^2}{4|c|^2}\right)B \eta_2-\frac{\alpha^2}{4|c|^2} \frac{r_{waist}^2}{c} \eta_3.
\end{align*}
Using the integral identities in Lemma \ref{BOONGBOONG}, we obtain
\begin{align*}
&\int_{\tilde{\gamma}}\phi_1(c,f,\alpha)\\
&=2i\sin\left(\frac{\pi}{g+1}\right)\left(HE_0-\frac{1}{c}E_1-c\left(\frac{1}{r_{waist}^2}+\frac{\alpha^2}{4|c|^2}\right)BE_2-\frac{\alpha^2}{4|c|^2}\frac{r_{waist}^2}{c}E_3\right).
\end{align*}

Since
\begin{align*}
B=\frac{E_1+E_3}{2|c|^2E_2}r_{waist}^2,\quad H=\frac{E_3-E_1}{2\overline{c}E_0}\left(1-\frac{\alpha^2}{4|c|^2} r_{waist}^2\right),
\end{align*}
it follows that
\begin{align*}
&HE_0-\frac{1}{c}E_1-c\left(\frac{1}{r_{waist}^2}+\frac{\alpha^2}{4|c|^2}\right)BE_2-\frac{\alpha^2}{4|c|^2}\frac{r_{waist}^2}{c}E_3\\
&=\frac{E_3-E_1}{2\overline{c}}\left(1-\frac{\alpha^2}{4|c|^2} r_{waist}^2\right)-\frac{1}{c}E_1-\frac{E_1+E_3}{2\overline{c}}\left(1+\frac{\alpha^2}{4|c|^2}r_{waist}^2\right)-\frac{\alpha^2}{4|c|^2}\frac{r_{waist}^2}{c}E_3\\
&=-\frac{2\textup{Re}\ c}{|c|^2}\left(E_1+\frac{\alpha^2}{4|c|^2}r_{waist}^2E_3\right).
\end{align*}
Hence,
\begin{align*}
\int_{\tilde{\gamma}}\phi_1(c,f,\alpha)=-4i\sin\left(\frac{\pi}{g+1}\right)\frac{\textup{Re}\ c}{|c|^2}\left(E_1+\frac{\alpha^2}{4|c|^2}r_{waist}^2E_3\right).
\end{align*}
In particular,
\begin{align*}
\textup{Re}\int_{\tilde{\gamma}}\phi_1(c,f,\alpha)=0.
\end{align*}

Similarly, we have
\begin{align*}
&\int_{\tilde{\gamma}}\phi_2(c,f,\alpha)=4i\sin\left(\frac{\pi}{g+1}\right)\frac{\textup{Im}\ c}{|c|^2}\left(E_1+\frac{\alpha^2}{4|c|^2}r_{waist}^2E_3\right),\\
&\int_{\tilde{\gamma}}\phi_4(c,f,\alpha)=4i\sin\left(\frac{\pi}{g+1}\right)\frac{\textup{Re}\ f}{|c|^2}r_{waist}^2E_3,\\
&\int_{\tilde{\gamma}}\phi_5(c,f,\alpha)=-4i\sin\left(\frac{\pi}{g+1}\right)\frac{\textup{Im}\ f}{|c|^2}r_{waist}^2E_3.
\end{align*}
These integrals are also purely imaginary, which completes the proof.
\end{proof}

\subsection{Embeddedness}
Combining the results established so far, we conclude that the meromorphic $1$-forms defined in Section \ref{GUGUGU} give rise to a well-defined minimal immersion $X_{c,f,\alpha}:\Sigma_g\setminus\{q_1,q_2,q_3\}\to\mathbb{R}^5$ via the generalized Weierstrass representation (\ref{gwr}). In this subsection, we prove that this immersion is an embedding.

The main idea is to show that the composition of the minimal immersion $X_{c,f,\alpha}$ with a suitable projection coincides with a deformed Costa--Hoffman--Meeks embedding of the same genus. This immediately implies that $X_{c,f,\alpha}$ itself is an embedding. Moreover, if we denote its image by $\mathsf{S_g}(c,f,\alpha)$, then the restriction of the projection to $\mathsf{S_g}(c,f,\alpha)$ is a bijection onto the deformed Costa--Hoffman--Meeks surface. Thus, $\mathsf{S_g}(c,f,\alpha)$ can be viewed as a graph over this surface.

To this end, let
\begin{align*}
\rho:=\frac{\overline{c}}{f}\left(\frac{1}{r_{waist}^2}-\frac{\alpha^2}{4|c|^2}\right),
\end{align*}
and let $W\subset\mathbb{R}^5$ be the three-dimensional subspace spanned by the following orthonormal vectors:
\begin{align*}
\frac{1}{\left(1+|\rho|^2\right)^{\frac{1}{2}}}(1,0,0,-\textup{Re}\ \rho,-\textup{Im}\ \rho),\quad \frac{1}{\left(1+|\rho|^2\right)^{\frac{1}{2}}}(0,1,0,-\textup{Im}\ \rho,\textup{Re}\ \rho),\quad (0,0,1,0,0).
\end{align*}
We identify $W$ with $\mathbb{R}^3$ via this orthonormal basis. Under this identification, the orthogonal projection $\textup{pr}_W:\mathbb{R}^5\to W\simeq \mathbb{R}^3$ is given by
\begin{align*}
&(x_1,x_2,x_3,x_4,x_5)\mapsto\\
&\left(\frac{x_1-\left(\textup{Re}\ \rho\right)x_4-\left(\textup{Im}\ \rho\right)x_5}{\left(1+|\rho|^2\right)^{\frac{1}{2}}},\frac{x_2-\left(\textup{Im}\ \rho\right)x_4+\left(\textup{Re}\ \rho\right)x_5}{\left(1+|\rho|^2\right)^{\frac{1}{2}}},x_3\right).
\end{align*}
We now compute $\textup{pr}_W\Phi_{c,f,\alpha}$, where
\begin{align*}
\Phi_{c,f,\alpha}=\left(\phi_1(c,f,\alpha),\phi_2(c,f,\alpha),\phi_3(c,f,\alpha),\phi_4(c,f,\alpha),\phi_5(c,f,\alpha)\right).
\end{align*}

A direct computation gives
\begin{align*}
\phi_1(c,f,\alpha)&-\left(\textup{Re}\ \rho\right)\phi_4(c,f,\alpha)-\left(\textup{Im}\ \rho\right)\phi_5(c,f,\alpha)\\
&=-\frac{E_1+E_3}{E_2}\left(\frac{1}{\overline{c}}\eta_2-\frac{1}{c}\frac{E_2}{E_1+E_3}(\eta_1-\eta_3)\right)
\end{align*}
and
\begin{align*}
\phi_2(c,f,\alpha)&-\left(\textup{Im}\ \rho\right)\phi_4(c,f,\alpha)+\left(\textup{Re}\ \rho\right)\phi_5(c,f,\alpha)\\
&=-\frac{E_1+E_3}{E_2}\left(\frac{i}{\overline{c}}\eta_2+\frac{i}{c}\frac{E_2}{E_1+E_3}(\eta_1-\eta_3)\right).
\end{align*}
Hence,
\begin{align*}
&-\frac{\left(1+|\rho|^2\right)^{\frac{1}{2}}E_2}{E_1+E_3} \textup{pr}_W\Phi_{c,f,\alpha}\\
&=\left(\frac{1}{\overline{c}}\eta_2-\frac{1}{c}\frac{E_2}{E_1+E_3}(\eta_1-\eta_3),\frac{i}{\overline{c}}\eta_2+\frac{i}{c}\frac{E_2}{E_1+E_3}(\eta_1-\eta_3),-\frac{\left(1+|\rho|^2\right)^{\frac{1}{2}}E_2}{E_1+E_3}L\eta_{13}\right).
\end{align*}

On the other hand, recall that the Weierstrass data for the Costa--Hoffman--Meeks surfaces constructed in \cite{HM1} are given by
\begin{align*}
\Phi_{CHM}:=\left(\eta_2-\frac{E_2}{E_1+E_3}(\eta_1-\eta_3),i\eta_2+i\frac{E_2}{E_1+E_3}(\eta_1-\eta_3),\left(\frac{8E_2}{E_1+E_3}\right)^{\frac{1}{2}}\eta_{13}\right)
\end{align*}
on $\Sigma_g$. Applying the rotation
\begin{align*}
\begin{pmatrix}
\mathcal{S}_{-\theta_c} & \\
 & 1\end{pmatrix}
\end{align*}
followed by a scaling by the factor $\frac{1}{|c|}$, we obtain the Weierstrass data
\begin{align*}
\left(\frac{1}{\overline{c}}\eta_2-\frac{1}{c}\frac{E_2}{E_1+E_3}(\eta_1-\eta_3),\frac{i}{\overline{c}}\eta_2+\frac{i}{c}\frac{E_2}{E_1+E_3}(\eta_1-\eta_3),\frac{1}{|c|}\left(\frac{8E_2}{E_1+E_3}\right)^{\frac{1}{2}}\eta_{13}\right).
\end{align*}

Next, consider the linear map $\mathcal{D}:\mathbb{R}^3\to\mathbb{R}^3$ defined by
\begin{align*}
(y_1,y_2,y_3)\mapsto \left(-\frac{E_1+E_3}{\left(1+|\rho|^2\right)^{\frac{1}{2}}E_2}y_1,-\frac{E_1+E_3}{\left(1+|\rho|^2\right)^{\frac{1}{2}}E_2}y_2,|c|\left(\frac{E_1+E_3}{8E_2}\right)^{\frac{1}{2}}Ly_3\right).
\end{align*}
Define
\begin{align*}
\mathcal{A}_{c,f,\alpha}:=\frac{1}{|c|}\mathcal{D}\circ
\begin{pmatrix}
\mathcal{S}_{-\theta_c} & \\
 & 1\end{pmatrix}.
\end{align*}
Combining the above observations gives
\begin{align*}
\textup{pr}_W\Phi_{c,f,\alpha}=\mathcal{A}_{c,f,\alpha}\Phi_{CHM}.
\end{align*} 
If we denote the Costa--Hoffman--Meeks immersion by $X_{CHM}:\Sigma_g\setminus\{q_1,q_2,q_3\}\to\mathbb{R}^3$, then, up to translation, this implies
\begin{align*}
\textup{pr}_W\circ X_{c,f,\alpha}=\mathcal{A}_{c,f,\alpha}\circ X_{CHM}.
\end{align*}
Since $X_{CHM}$ is an embedding, the minimal immersion $X_{c,f,\alpha}$ is also an embedding as explained above. This completes the proof.

\begin{rmk}
\normalfont
The deformed meromorphic $1$-forms $\mathcal{A}_{c,f,\alpha}\Phi_{CHM}$ do not satisfy the conformality condition. Indeed, the squares of the two scaling factors in the definition of $\mathcal{D}$ differ by
\begin{align*}
\frac{(E_1+E_3)^2}{(1+|\rho|^2)E_2^2}-\frac{E_1+E_3}{8E_2}|c|^2L^2=\frac{E_1+E_3}{E_2}\overline{c}H,
\end{align*}
which is nonzero. Consequently, the sum of the squares of the components of $\mathcal{A}_{c,f,\alpha}\Phi_{CHM}$ does not vanish identically. Thus, the parametrizations of the deformed Costa--Hoffman--Meeks surfaces remain harmonic but are no longer conformal. From this perspective, our construction of $\mathsf{S_g}(c,f,\alpha)$ can be viewed as restoring conformality by introducing two additional harmonic coordinate functions.
\end{rmk}

\subsection{Total curvature and asymptotics of the ends}
To complete the proof of the main theorem, we establish the remaining geometric properties of $\mathsf{S_g}(c,f,\alpha)$. We begin by computing its total curvature. Although the argument is standard, we include it for completeness. Let $K_{\mathsf{S_g}(c,f,\alpha)}$ denote
its Gaussian curvature, and consider the divisor
\[
D=2q_1+2q_2+2q_3
\]
on $\Sigma_g$. The meromorphic $1$-forms $\phi_1(c,f,\alpha),\ldots,\phi_5(c,f,\alpha)$ are holomorphic
sections of $C\otimes[D]$, where $C$ denotes the canonical line bundle of $\Sigma_g$.
As discussed at the end of Section \ref{GUGUGU}, they have no common
zeros on $\Sigma_g$.

Consequently, the generalized Gauss map extends holomorphically to $\Sigma_g$.
Its degree $d$ satisfies
\[
d=\deg(C\otimes[D])
 =\deg C+\deg D
 =(2g-2)+6
 =2g+4.
\]
It follows that
\[
\int_{\Sigma_g\setminus\{q_1,q_2,q_3\}} K_{\mathsf{S_g}(c,f,\alpha)}\,dA
=-2\pi d
=-4\pi(g+2).
\]
In particular, the surface has finite total curvature.

We now describe the asymptotic behavior of $\mathsf{S_g}(c,f,\alpha)$ at its three ends.
At each puncture $q_j$, choose a local coordinate $\zeta$ with
$\zeta(q_j)=0$. In this coordinate, the Weierstrass data have the Laurent expansion
\[
(\phi_1(c,f,\alpha),\ldots,\phi_5(c,f,\alpha))
=
\left(
\frac{a_j}{\zeta^2}+\frac{b_j}{\zeta}+h_j(\zeta)
\right)d\zeta,
\]
where $a_j\in\mathbb{C}^5\setminus\{0\}$ and $h_j$ is holomorphic.
The differentials $\eta_1,\eta_2,\eta_3$ have vanishing residues,
whereas
\[
\operatorname{res}_{q_1}\eta_{13}=\frac{g+1}{2},
\quad
\operatorname{res}_{q_3}\eta_{13}=-\frac{g+1}{2}.
\]
Thus, we obtain
\begin{align*}
b_1=\left(0,0,\frac{(g+1)L}{2},0,0\right),\quad b_2=(0,0,0,0,0),\quad b_3=\left(0,0,-\frac{(g+1)L}{2},0,0\right).
\end{align*}

The conformality of the immersion implies that
\[
a_j\cdot a_j=0,
\quad
a_j\cdot b_j=0,
\]
where the dot denotes the complex bilinear extension of the
Euclidean inner product. Integrating the Laurent expansion gives 
\[
X_{c,f,\alpha}(\zeta)
=
\operatorname{Re}\left(-\frac{a_j}{\zeta}\right)
+b_j\log|\zeta|+p_j+O(|\zeta|)
\]
for some $p_j\in\mathbb{R}^5$. Since $a_j\cdot a_j=0$ and $a_j\neq0$, the leading term
parametrizes a plane parallel to
\[
P_j=
\operatorname{span}_{\mathbb{R}}
\{\operatorname{Re}a_j,\operatorname{Im}a_j\}.
\]
Moreover, $a_j\cdot b_j=0$ and $b_j\in\mathbb{R}^5$
imply that $b_j$ is perpendicular to $P_j$.
Thus, the ends at $q_1$ and $q_3$ have nonzero logarithmic growth, with opposite logarithmic coefficients
\[
b_1=-b_3=\left(0,0,\frac{(g+1)L}{2},0,0\right),
\]
whereas the end at $q_2$ is asymptotic to the affine plane
$p_2+P_2$, since $b_2=0$. 

From the coefficients of $\eta_1,\eta_2,\eta_3$ in the
Weierstrass data, we see that $P_1$ and $P_3$ coincide with
the limiting tangent planes of the two ends of $\mathsf{HO}(c,f,\alpha)$, respectively, while $P_2$ is parallel to $\mathsf{P}(c,f,\alpha)$. 

Finally, we show that $\mathsf{S_g}(c,f,\alpha)$ is not contained in any proper affine subspace of $\mathbb{R}^5$. Suppose that its image is contained in an affine hyperplane. Then there exists a nonzero vector $v=(v_1,\ldots,v_5)\in\mathbb{R}^5$ such that
\[
\sum_{j=1}^5 v_j\phi_j(c,f,\alpha)=0.
\]
Comparing the leading coefficients of the Laurent expansions at $q_1$ gives 
\begin{align*}
\frac{1}{c}(v_1-i v_2)=0,
\end{align*}
and hence $v_1=v_2=0$. Comparing the corresponding coefficients at $q_3$ then gives
\[
\frac{f}{|c|^2}r_{waist}^2(v_4+i v_5)=0,
\]
which implies that $v_4=v_5=0$. The remaining identity is $v_3L\eta_{13}=0$. Since $L\neq0$, we obtain $v_3=0$, contradicting the assumption that $v\neq 0$. This completes the proof.

\begin{rmk}\label{rmk57}
\normalfont
The surface $\mathsf{S_g}(c,f,\alpha)$ contains no affine line. This contrasts with the Costa--Hoffman--Meeks surfaces and many other classical minimal surfaces in $\mathbb{R}^3$, which contain straight lines. Indeed, by the preceding analysis of the end asymptotics, any affine line contained in $\mathsf{S_g}(c,f,\alpha)$ must approach the planar end at $q_2$ in both directions. The distance to $p_2+P_2$ therefore tends to zero along both rays, which implies that the entire line lies in $p_2+P_2$. We show that this is impossible. Set
\[
\Omega=\left(\frac{z}{w}\right)^{g-1}\frac{dz}{w}.
\]
In the local coordinate $\zeta=z/w$ near $q_2$, noting that the planar end corresponds to $z \to \infty$, we have
\[
\Omega=\left(-(g+1)\zeta^{g-1}+O\left(\zeta^{3g+1}\right)\right)d\zeta,
\quad \eta_{13}=\zeta\Omega =O\left(\zeta^{g}\right) d\zeta,
\]
and $$\eta_1,\eta_3=O\left(\zeta^{g+1}\right)d\zeta.$$ Let $\pi_2$ denote the orthogonal projection onto $P_2^\perp$. Since the real and imaginary parts of the coefficient vector of $\eta_2$ in the Weierstrass data span $P_2$, the term involving $\eta_2$ vanishes under $\pi_2$. Integrating the Weierstrass data and applying $\pi_2$, we obtain
\begin{align*}
&\left|\pi_2\left(X_{c,f,\alpha}(\zeta)-p_2\right)\right|^2\\
&\qquad =\frac{(g+1)^2}{g^2}|H|^2\left(1+\frac{|c|^2}{|f|^2}\left(\frac{1}{r_{waist}^2}+\frac{\alpha^2}{4|c|^2}\right)^2\right)|\zeta|^{2g}+O\left(|\zeta|^{2g+2}\right).
\end{align*}
Since $H\neq0$, the leading coefficient is strictly positive. Thus, the planar end is disjoint from $p_2+P_2$ for all sufficiently small nonzero $\zeta$, contradicting the existence of such a line.
\end{rmk}


\end{document}